\newtheorem{thm}{\bf Theorem}[section]
\newtheorem{lem}{\bf Lemma}[section]
\newtheorem{exam}{\bf Example}[section]
\newenvironment{proof}{{\noindent\it Proof}\quad}{\hfill $\square$\par}
\newtheorem{rem}{\bf Remark}
\numberwithin{equation}{section}
\title{Randomized Core Reduction for Discrete Ill-Posed Problem}
\author{
Liping Zhang
\thanks{Department of Mathematics, Zhejiang University of Technology, Hangzhou 310023, PR China. (zhanglp@zjut.edu.cn).}
\and
Yimin Wei
\thanks{Corresponding author (Y. Wei). ( ymwei@fudan.edu.cn and yimin.wei@gmail.com). School of Mathematical Sciences and Key Laboratory of Mathematics for Nonlinear Sciences, Fudan University, Shanghai 200433, PR China.  }
}
\begin{document}


\maketitle
\begin{abstract}

In this paper, we apply randomized algorithms to approximate the total least squares (TLS) solution of the problem $Ax\approx b$ in the large-scale discrete ill-posed problems. A regularization technique, based on the multiplicative randomization and the subspace iteration, is proposed to obtain the approximate core problem.  In the error analysis, we provide upper bounds 
for the errors of the solution and the residual of the randomized core reduction. Illustrative numerical examples and comparisons are presented.
\end{abstract}

{\bf Keywords: }
core problem; TLS problem; randomized algorithms; SVD; ill-posed.

{\bf AMS subject classifications: }
15A09, 65F20.

\section{Introduction}\label{sec_int}

Consider the discrete ill-posed linear system
\begin{equation}\label{eq:Axb}
Ax\approx b, \quad\quad A\in \mathbb{R}^{m\times n},(m\geq n)
\end{equation}
where the matrix $A$ is of  full column rank and numerical low-rank.
In practice,
many discrete ill-posed problems arising from physics and engineering can be reduced to the problem \eqref{eq:Axb}.
To reduce the severe instability of \eqref{eq:Axb}, we introduce the approximate core problem, which is  well-conditioned, low dimensional and can be obtained by randomized algorithms.

The concept of core problem is proposed by Paige and Strako\v{s} in \cite{paige2006core} and
 used to find the minimum norm solution of the TLS problem.
In detail, for the matrix $A\in \mathbb{R}^{m\times n}$, $b\in \mathbb{R}^{m}$, there exist orthogonal matrices $P^{\mathrm T} = P^{-1}$ and $Q^{\mathrm T} = Q^{-1}$, satisfying
\begin{equation}\label{core_s}
P^{\mathrm T} \begin{bmatrix} b & A\end{bmatrix}\begin{bmatrix} 1 & \\ & Q \end{bmatrix}
= \begin{bmatrix}b_1 &A_{11}&  \bf{0} \\ \bf{0}&\bf{0} &A_{22}\end{bmatrix},
\end{equation}
where $b_1\in \mathbb{R}^r$ and $A_{11}$ are of minimal dimensions.
The sub-problem defined by $[A_{11},~b_1]$, leading to the sufficient and necessary conditions for solving the original problem $Ax\approx b$, is called the \emph{core problem}.
The remaining part $A_{22}$ has a trivial (zero) right-hand side and a maximal dimension.
This transformation can be obtained by the singular value decomposition (SVD) of $A$, the Householder transformation \cite{hnetynkova2013core,paige2006core} and the Golub-Kahan bidiagonalization; see \cite{hnetynkova2007lanczos}.

An important application of the core problem is the TLS problem, which considers the perturbations of the coefficient matrix $A$ and the right-hand side $b$ simultanously, i.e.,
\begin{equation}\label{TLS}
    \min_{E,f}\|[E, ~f]\|_F, \quad \textrm{subject to } (A+E)x = b+f.
\end{equation}
If $\sigma_n(A) > \sigma_{n+1}([b,~A])$, the above TLS problem has the closed-form \cite[Theorem 2.7]{van1991total}
\begin{equation}\label{TLS_close}
  (A^{\mathrm T} A -\sigma_{n+1}^2([b,~A])I_n) x = A^{\mathrm T} b.
\end{equation}
Substituting the decomposition of the core problem \eqref{core_s},
we can get the solution of  \eqref{TLS} by  solving the  following TLS problem of lower dimension,
\begin{equation}\label{TLS_1}
 \min_{E_1,f_1}\|[E_1, ~f_1]\|_F, \quad \textrm{subject to } (A_{11}+E_1)y = b_1+f_1,
\end{equation}
or the corresponding closed-form,
\begin{equation}\label{close_1}
  \left[A_{11}^{\mathrm T} A_{11} - \sigma_{r+1}^2([b_1,~A_{11}])I_r\right]y = A_{11}^{\mathrm T} b_1,
\end{equation}
and by back-transformation of $y$ to the original coordinates, $x = Q[y^{\mathrm T},~0]^{\mathrm T}$.

The classical method for solving the small-scale TLS problem \eqref{TLS_1} is based on the SVD of augmented matrix $[A_{11},~b_1]$; see \cite[Section 2.3.2]{van1991total}.
There also exist some other efficient methods, such as
Lanczos or Golub-Kahan bidiagonalization \cite{lampe2010solving} and the Rayleigh quotient iteration \cite{bjorck2000methods}.
If we have the SVD of the coefficient matrix $A_{11}$ in advance, the exact solution of \eqref{TLS_1} can also be expressed by the the SVD of $A_{11}$ based on the closed-form \eqref{close_1}; see Lemma \ref{lem_svdA} for detail.
The truncated TLS is an effective regularization method for solving ill-posed problems \cite{fierro1997regularization}. With the SVD of augmented matrix $[A,~b]$,
\begin{equation*}
\begin{bmatrix}
  A & b
\end{bmatrix} = \sum_{i=1}^{n+1}\bar{u}_i\bar{\sigma}_i\bar{v}_i^{\mathrm T} =\bar{U}\bar{\Sigma} \bar{V}^{\mathrm T} = \bar{U}\bar{\Sigma}
\begin{bmatrix} \bar{V}_{11}&\bar{V}_{12}\\ \bar{V}_{21} & \bar{V}_{22}\end{bmatrix}^{\mathrm T},
\end{equation*}
choose a truncation parameter $t \leq  \min\{n, {\rm rank}[A, ~b]\}$ such that
\begin{equation*}
  \bar{\sigma}_{t+1}<\bar{\sigma}_t, \quad {\rm and}\quad 0\neq \bar{V}_{22}\in \mathbb{R}^{1\times (n-t+1)}.
\end{equation*}
It is reasonable to assume a well-defined gap in the singular value spectrum, though generally, rank determination is a difficult problem, even with the SVD \cite{fierro1994collinearity}.
   Denoting $[\tilde{A},~\tilde{b}] = \sum_{i=1}^{t}\bar{u}_i\bar{\sigma}_i\bar{v}_i^{\mathrm T}$, the truncated TLS solution $\tilde{x}$ is the minimum norm solution to $\tilde{A}x=\tilde{b}$ and the minimum norm LS solution to $\min_x\|\tilde{A}x-b\|$; see \cite{fierro1994collinearity}. Consequently, we obtain
 \begin{equation}\label{x_trun}
   \tilde{x} = -\bar{V}_{12}\bar{V}_{22}^\dag = (\bar{V}_{11}^{\mathrm T})^\dag (\bar{V}_{21}^{\mathrm T})=\tilde{A}^\dag b.
 \end{equation}
 When $t=n$, the solution $\tilde{x} = x$ gives the exact solution of the TLS problem \eqref{TLS}.

The core concept to the case with multiple right-hand sides $AX\approx B$ is considered  in \cite{hnetynkova2013core, hnetynkova2016solvability, hnetynkova2011total} and realized by the Golub-Kahan bidiagonalization \cite{hnetynkova2015band}.
Recently Hn\v{e}tynkov\'{a} \emph{et al.} extend the core reduction to tensor for problems with structured right-hand sides \cite{hnetynkova2018tls}.
 Note that different kinds of condition number of the multidimensional TLS has been given in \cite{zheng2017condition} and the TLS minimization with multiple right-hand sides with respect to different unitarily invariant norms is considered in \cite{wang2017total}.

With the core problem, the dimension of the original problem is reduced. However, applying the classical tools such as the SVD to obtain the exact core problem is unrealistic for large-scale problems.
Furthermore, for the ill-posed problems that the larger singular values dominate the solution, the full SVD seems unnecessary.
Thus we propose the approximate core problem by randomized algorithms, to get ride of decompositions of large matrices, which can be regards as a regularization technique.

Recently, different kinds of randomized algorithms have been proposed to compute  the low-rank matrix approximation \cite{ avron2010blendenpik, coakley2011fast, drineas2011faster, gower2015randomized, halko2011finding, mahoney2010randomized, martinsson2011randomized, meng2011lsrn, rachkovskij2012randomized,rokhlin2008fast, teng2018fast, woodruff2014sketching, woolfe2008fast}.
The main idea is to obtain a projection by a random matrix (Gaussian matrix or matrix generated by the sub-sampled randomized Fourier transform (SRFT) \cite{meng2011lsrn, rokhlin2008fast, woolfe2008fast}) or random sampling \cite{avron2010blendenpik, martinsson2011randomized} with preconditioning \cite{coakley2011fast, rachkovskij2012randomized}; refer also to the review paper \cite{halko2011finding}. Gu presented a randomized algorithm within the subspace iteration framework which gives accurate low-rank approximations of high probability, for matrices with rapidly decaying singular values \cite{gu2015subspace}.
 By the randomized algorithms proposed in \cite{halko2011finding}, the ill-posed problems are solved efficiently by Xiang and Zou in \cite{xiang2013regularization, xiang2014randomized}. We also provide the error analysis for the randomized generalized singular value decomposition (GSVD) in \cite{wei2016tikhonov}.
 Jia and Yang improve our bounds of the approximation accuracy  for severely, moderately and mildly ill-posed problems; see \cite{jia2018modified}.
Randomized algorithms are also used for the generalized
Hermitian eigenvalue problems by Saibaba \emph{et al.} \cite{saibaba2015randomized}
and for the TLS problem by Xie \emph{et al.}  \cite{xie2014statistical}.

In this paper, we propose a randomized algorithm based on the subspace iteration method for the linear system \eqref{eq:Axb}, and construct an approximate core problem for this system. If $A$ has a significantly low numerical rank, the dimension of the approximate core problem can be much reduced. We can prove that the smaller system yields an accurate approximate solution.

The paper is organized as follows. 
Randomized algorithms are proposed in Section \ref{sec_randomized}, with the error analyses in Section \ref{sec_error}.
The improvement in time and memory requirements are
illustrated with numerical examples in Section \ref{sec_numerical} and
Section \ref{sec conclusion} concludes the paper.

 Throughout this paper, $\mathbb{R}^{m\times n}$
denotes the set of $m\times n$ matrices with real entries. $I_n$ stands for the
identity matrix of order $n$.
All the norm $\|\cdot\|$  is the 2-norm.
The $k$-th singular value of $A$ is $\sigma_{k}(A)$ and
$\sigma'_k=\sigma_k(A)$, $\bar{\sigma}_k = \sigma_k([A,~b])$.
Denote the approximate matrix of $A$ by a randomized algorithm as $A_r$,  $\mathcal{R}(A)$ is the range space of $A$ and
${\rm vec}(A)$ is vectorization of matrix $A$.
Moreover, a standard Gaussian random matrix has independent standard normal components.

\section{Randomized Algorithm}\label{sec_randomized}
A large amount of research has considered randomized algorithms recently \cite{avron2010blendenpik,coakley2011fast,halko2011finding,
rachkovskij2012randomized,rokhlin2008fast,saibaba2015randomized,
xiang2014randomized,xie2014statistical}. Well-designed randomized algorithms
are potentially more efficient, especially for large-scale problems.

For the original problem $Ax\approx b$, we can derive the approximate core problem as follows,
\begin{equation}\label{core_appx}
 P^{\mathrm T} \begin{bmatrix}b & A\end{bmatrix}\begin{bmatrix} 1 & \\ & Q \end{bmatrix}
 = \begin{bmatrix}b_1 & A_{11}& A_{12} \\ b_2 & A_{21} &A_{22}\end{bmatrix}
 \approx \begin{bmatrix}b_1 & A_{11}&  \bf{0} \\ \bf{0} & \bf{0} &A_{22}\end{bmatrix},
\end{equation}
where $\max\{\|b_2\|,~\|A_{12}\|,~\|A_{21}\|\} \leq \delta$ for some small $\delta>0$.
Then the original problem $Ax \approx b$ can be solved approximately by $A_{11}y\approx b_1$ with $x\approx Q [ y^{\mathrm T} , 0 ]^{\mathrm T}$. The dimension of the problem is reduced evidently.

Now we adopt randomized algorithms to achieve the approximate core problem in \eqref{core_appx}.
Since the ill-poseness stems from the coefficient matrix $A$, we project $A$ to a small subspace of $\mathcal{R}(A)$ with $b$ projected accordingly.
Then a small approximate core problem is obtained.
This randomization idea has been used on the SVD \cite{xiang2013regularization} and the GSVD for regularization \cite{wei2016tikhonov}.


First we cite an important inequality  for randomized algorithms.

\begin{lem}\emph{\cite[Corollary 10.9]{halko2011finding}}\label{lem_Qconv}
Suppose that $A\in \mathbb{R}^{m\times n}$ has the singular values $\sigma_1'\geq \sigma_2'
\geq \ldots \geq \sigma_n'$. Let $\Omega$ be an $n\times (k+s)$ standard Gaussian matrix with $k+s\leq \min\{m,n\}$, $s \geq 4$, and $Q$ be an orthonormal basis for the range of the sampled matrix $A \Omega$. Then
\begin{equation}\label{est_Q}
\left\|A-QQ^{\mathrm T} A\right\|\leq \left(1+16\sqrt{1+\frac{k}{s+1}}\right)\sigma'_{k+1}+\frac{8\sqrt{k+s}}{s+1}\sqrt{
\sum_{j>k}{\sigma'}_j^2}
\end{equation}
with probability not less than $1-3e^{-s}$.
\end{lem}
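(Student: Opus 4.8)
The plan is to carry out the standard two-stage analysis of a randomized range finder: first prove a purely deterministic bound on $\|A - QQ^{\mathrm T}A\|$ in terms of the test matrix $\Omega$, and then insert the Gaussian model and control the resulting random quantities by concentration.

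First I would pass to the SVD. Write $A = U\Sigma V^{\mathrm T}$ with $\Sigma = \mathrm{diag}(\Sigma_1,\Sigma_2)$, where $\Sigma_1 = \mathrm{diag}(\sigma'_1,\dots,\sigma'_k)$ and $\Sigma_2$ holds the remaining singular values, and partition the rotated test matrix as $V^{\mathrm T}\Omega = [\Omega_1^{\mathrm T}~\Omega_2^{\mathrm T}]^{\mathrm T}$ conformally. The deterministic core of the argument is that, whenever $\Omega_1$ has full row rank, the orthogonal projector $QQ^{\mathrm T}$ onto $\mathcal{R}(A\Omega)$ satisfies
\begin{equation*}
\|A - QQ^{\mathrm T}A\|^2 \le \|\Sigma_2\|^2 + \|\Sigma_2\Omega_2\Omega_1^{\dagger}\|^2 .
\end{equation*}
This follows by exhibiting a specific matrix lying in $\mathcal{R}(A\Omega)$ whose residual against $A$ can be written out explicitly, and then using that $QQ^{\mathrm T}A$ is the best approximation of $A$ from that subspace. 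Since $\|\Sigma_2\| = \sigma'_{k+1}$ and $\sqrt{a^2+b^2}\le a+b$ for $a,b\ge 0$, it suffices to show that $\|\Sigma_2\Omega_2\Omega_1^{\dagger}\| \le 16\sqrt{1+k/(s+1)}\,\sigma'_{k+1} + \tfrac{8\sqrt{k+s}}{s+1}(\sum_{j>k}{\sigma'_j}^2)^{1/2}$ with the stated probability.

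Second I would bring in the randomness. Because $\Omega$ is standard Gaussian and $V$ is orthogonal, $V^{\mathrm T}\Omega$ is again standard Gaussian, so $\Omega_1 \in \mathbb{R}^{k\times(k+s)}$ and $\Omega_2 \in \mathbb{R}^{(n-k)\times(k+s)}$ are independent standard Gaussian matrices; in particular $\Omega_1$ has full row rank almost surely. Conditioning on $\Omega_1$, the map $\Omega_2 \mapsto \|\Sigma_2\Omega_2\Omega_1^{\dagger}\|$ is Lipschitz with constant $\|\Sigma_2\|\,\|\Omega_1^{\dagger}\| = \sigma'_{k+1}\|\Omega_1^{\dagger}\|$, and its conditional mean is controlled by the elementary estimate $\mathbb{E}\|S G T\| \le \|S\|\,\|T\|_F + \|S\|_F\,\|T\|$ for fixed $S,T$ and Gaussian $G$, giving $\mathbb{E}_{\Omega_2}\|\Sigma_2\Omega_2\Omega_1^{\dagger}\| \le \sigma'_{k+1}\|\Omega_1^{\dagger}\|_F + (\sum_{j>k}{\sigma'_j}^2)^{1/2}\|\Omega_1^{\dagger}\|$. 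Gaussian concentration for Lipschitz functions then yields, for each fixed $\Omega_1$, an exponential tail for $\|\Sigma_2\Omega_2\Omega_1^{\dagger}\|$ above this conditional mean.

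Third I would supply the tail estimates for the pseudoinverse of the fat Gaussian matrix $\Omega_1$ (with $k$ rows and $k+s$ columns, $s\ge 4$): an expectation/tail bound for $\|\Omega_1^{\dagger}\|_F$ of order $\sqrt{k/(s+1)}$, and a polynomial tail bound for $\|\Omega_1^{\dagger}\|$ of order $\sqrt{k+s}/(s+1)$, the latter resting on a lower bound for the smallest singular value of a short-fat Gaussian matrix. Intersecting the event that the conditional deviation is small with the two events that $\|\Omega_1^{\dagger}\|_F$ and $\|\Omega_1^{\dagger}\|$ obey their bounds, and choosing the free parameters in the tail bounds so that each of the three exceptional events has probability at most $e^{-s}$, one obtains on an event of probability at least $1-3e^{-s}$ that the term $\sigma'_{k+1}\|\Omega_1^{\dagger}\|_F$ together with the concentration term $\sigma'_{k+1}\|\Omega_1^{\dagger}\|\cdot(\text{deviation})$ combine into the $16\sqrt{1+k/(s+1)}\,\sigma'_{k+1}$ piece, while $(\sum_{j>k}{\sigma'_j}^2)^{1/2}\|\Omega_1^{\dagger}\|$ produces the $\tfrac{8\sqrt{k+s}}{s+1}(\sum_{j>k}{\sigma'_j}^2)^{1/2}$ piece, after absorbing numerical factors such as $e^2 \le 8$. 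Feeding this into the deterministic bound completes the proof.

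I expect the main obstacle to be the spectral-norm tail bound for $\Omega_1^{\dagger}$, i.e. bounding the smallest singular value of a short-fat Gaussian matrix away from zero with an exponentially small failure probability; the remaining difficulty is bookkeeping, namely tuning the several free parameters and simplifying the accumulated constants so that the clean coefficients $16$ and $8$ emerge while the total failure probability stays at $3e^{-s}$.
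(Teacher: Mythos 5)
The paper does not prove this lemma; it is quoted verbatim from Corollary 10.9 of Halko, Martinsson, and Tropp \cite{halko2011finding}, so there is no in-paper argument to compare against. Your outline correctly reconstructs the proof given in that reference --- the deterministic bound $\|A-QQ^{\mathrm T}A\|^2\le\|\Sigma_2\|^2+\|\Sigma_2\Omega_2\Omega_1^{\dagger}\|^2$, the splitting of $V^{\mathrm T}\Omega$ into independent Gaussian blocks, the estimate $\mathbb{E}\|SGT\|\le\|S\|\,\|T\|_F+\|S\|_F\|T\|$, Lipschitz concentration conditional on $\Omega_1$, and the tail bounds for $\|\Omega_1^{\dagger}\|_F$ and $\|\Omega_1^{\dagger}\|$ combined by a union bound over three events of probability at most $e^{-s}$ each --- and the constants $16$ and $8$ do emerge from the parameter choices you describe.
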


Gu gave a stronger result if $Q$ is selected by subspace iteration and the large deviation bound is given as follows.

\begin{lem}\emph{\cite[Theorem 5.8]{gu2015subspace}}\label{lem_Qgu}
Suppose that $A\in \mathbb{R}^{m\times n}$ has the singular values $\sigma_1'\geq \sigma_2'
\geq \ldots \geq \sigma_n'$. Let $\Omega$ be an $n\times (k+s)$ standard Gaussian matrix with $k+s\leq \min\{m,n\}$, $s \geq p\geq 0$, and $Q$ be an orthonormal basis for the range of the sampled matrix $(AA^{\mathrm T})^qA \Omega$.
Given any $0<\Delta\ll 1$, define
\begin{equation}\label{c_delta}
  \mathcal{C}_\Delta = \frac{e\sqrt{k+s}}{p+1}\left(\frac{2}{\Delta}\right)^{\frac{1}{p+1}}
  \left(\sqrt{n-k-s+p}+\sqrt{k+s}+\sqrt{2\log\frac{2}{\Delta}}\right).
\end{equation}
We then have
\begin{equation}\label{est_Qnew}
\left\|A-QQ^{\mathrm T} A\right\|\leq \sqrt{{\sigma'}_{k+1}^2+k\mathcal{C}_\Delta^2{\sigma'}_{k+1+s-p}^2
\left(\frac{{\sigma'}_{k+1+s-p}}{{\sigma'}_k}\right)^{4q}}\leq \sqrt{1+k\mathcal{C}_\Delta^2
\left(\frac{{\sigma'}_{k+1+s-p}}{{\sigma'}_k}\right)^{4q}}{\sigma'}_{k+1},
\end{equation}
with probability not less than $1-\Delta$.
\end{lem}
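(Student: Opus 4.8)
The plan is to prove \eqref{est_Qnew} by the same two-stage scheme that underlies Lemma~\ref{lem_Qconv} (i.e.\ \cite[Cor.~10.9]{halko2011finding}): a purely deterministic structural estimate that isolates the randomness into a small Gaussian ``cross'' factor, followed by sharp large-deviation bounds for that factor --- but now the power $(AA^{\mathrm T})^qA$ is carried through every step so that the exponent $4q$ and the shifted index $k+1+s-p$ survive. Write the SVD $A=U\Sigma V^{\mathrm T}$ and split the right singular vectors into three blocks $V=[\,V_1\ V_2\ V_3\,]$: $V_1$ spanning the leading $k$ directions, $V_2$ the $s-p$ directions with indices $k+1,\dots,k+s-p$ (the hypothesis $s\ge p$ is precisely what makes this block well-defined), and $V_3$ the remaining $n-k-s+p$ directions. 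Partition $\Sigma=\mathrm{diag}(\Sigma_1,\Sigma_2,\Sigma_3)$ conformally, set $\Omega_i=V_i^{\mathrm T}\Omega$, and write $\Omega_{12}=[\Omega_1^{\mathrm T}\ \Omega_2^{\mathrm T}]^{\mathrm T}\in\mathbb R^{(k+s-p)\times(k+s)}$; since $\Omega_{12}$ has $p$ more columns than rows, it has full row rank almost surely.

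The first stage is the deterministic bound. Because $Q$ is an orthonormal basis for $\mathcal R\!\big((AA^{\mathrm T})^qA\,\Omega\big)$ and $(AA^{\mathrm T})^qA=U\Sigma^{2q+1}V^{\mathrm T}$, the projector $I-QQ^{\mathrm T}$ is unchanged under any invertible right multiplication of the sketch, which lets one bring $\mathcal R(Q)$ to a graph form over the leading invariant subspace. The usual projection computation, in the refined form of \cite{gu2015subspace}, then produces a deterministic inequality of the shape
\[
\big\|A-QQ^{\mathrm T}A\big\|^2\ \le\ {\sigma'}_{k+1}^{\,2}\ +\ k\,{\sigma'}_{k+1+s-p}^{\,2}\Big(\tfrac{{\sigma'}_{k+1+s-p}}{{\sigma'}_k}\Big)^{4q}\,\Xi^2 ,
\]
where $\Xi$ is an explicit function of the Gaussian blocks, essentially $\Xi=\|\Omega_3\|\,\sigma_{\min}(\Omega_{12})^{-1}$. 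In this bound the first term collects the part of $A$ already aligned with $\mathcal R(Q)$; the bounds $\|\Sigma_3^{\,2q+1}\|={\sigma'}_{k+1+s-p}^{\,2q+1}$ and $\|\Sigma_1^{-2q}\|={\sigma'}_k^{-2q}$ account for the displayed powers after one squares the norm of the cross term (so the singular-value power $2q+1$ becomes the extra $2$ on the leading tail value and leaves $4q$ in the ratio); the shift from $k+1$ to $k+1+s-p$ in the interaction term reflects that the $s-p$ excess oversampling directions absorb the corresponding middle singular values; and the prefactor $k$ is the loss in passing from a Frobenius bound on the rank-$\le k$ cross term to its operator norm. With this in hand, \eqref{est_Qnew} reduces to showing $\Xi\le\mathcal C_\Delta$ with probability at least $1-\Delta$; the second inequality in \eqref{est_Qnew} then follows by factoring out ${\sigma'}_{k+1}^{\,2}$ and using ${\sigma'}_{k+1+s-p}\le{\sigma'}_{k+1}$.

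The second stage bounds $\|\Omega_3\|$ and $\sigma_{\min}(\Omega_{12})^{-1}$. By orthogonal invariance of the Gaussian law, $\Omega_3\in\mathbb R^{(n-k-s+p)\times(k+s)}$ and $\Omega_{12}\in\mathbb R^{(k+s-p)\times(k+s)}$ are independent standard Gaussian matrices. For $\Omega_3$, the Davidson--Szarek / Gaussian concentration bound (cf.\ \cite[Prop.~10.3]{halko2011finding}) gives $\|\Omega_3\|\le\sqrt{n-k-s+p}+\sqrt{k+s}+\sqrt{2\log(2/\Delta)}$ outside an event of probability $\Delta/2$ --- exactly the bracket in \eqref{c_delta}. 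The delicate quantity is $\sigma_{\min}(\Omega_{12})^{-1}$: the smallest singular value of a nearly square Gaussian matrix is only polynomially small, so its reciprocal is heavy-tailed and admits no sub-Gaussian bound; however, its negative moments of order up to $p+1$ are finite (the number of excess columns being $p$, which is why the range $0\le p\le s$ appears), and applying Markov's inequality to the $(p+1)$-st negative moment --- whose $(p+1)$-st root is $O\!\big(e\sqrt{k+s}/(p+1)\big)$ --- bounds $\sigma_{\min}(\Omega_{12})^{-1}$ by $\tfrac{e\sqrt{k+s}}{p+1}\,(2/\Delta)^{1/(p+1)}$ outside an event of probability $\Delta/2$. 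This is precisely what produces both the amplification $(2/\Delta)^{1/(p+1)}$ and the prefactor in $\mathcal C_\Delta$, and it explains the tuning parameter $p$: a larger $p$ weakens the tail amplification but pushes the interaction singular value up from ${\sigma'}_{k+1}$ to ${\sigma'}_{k+1+s-p}$. A union bound over the two exceptional events gives $\Xi\le\mathcal C_\Delta$ with probability at least $1-\Delta$, and substituting into the deterministic bound completes the proof.

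I expect the main obstacle to be the sharp large-deviation estimate for $\sigma_{\min}(\Omega_{12})^{-1}$ through its $(p+1)$-st negative moment: matching the constant to $\mathcal C_\Delta$ in \eqref{c_delta} exactly, including the factor $e$ and the $1/(p+1)$, is where essentially all the technical work lies. The deterministic reduction is, by contrast, a careful but routine adaptation of the power-iteration analysis behind Lemma~\ref{lem_Qconv}; its only subtlety is the three-block bookkeeping needed to land the interaction term on the index $k+1+s-p$ while keeping ${\sigma'}_{k+1}$ in the aligned term.
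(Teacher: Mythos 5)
The paper gives no proof of this lemma: it is quoted verbatim from \cite[Theorem 5.8]{gu2015subspace}, so there is no internal argument to compare against. Your sketch does reproduce the architecture of Gu's actual proof --- a deterministic reduction in which the projector $I-QQ^{\mathrm T}$ is analyzed through the three-block splitting $V=[V_1\ V_2\ V_3]$ of sizes $k$, $s-p$, $n-k-s+p$, isolating a cross term controlled by $\|\Omega_3\|\,\sigma_{\min}(\Omega_{12})^{-1}$, followed by the Davidson--Szarek bound for $\|\Omega_3\|$ and a Chen--Dongarra-type tail bound for the smallest singular value of the $(k+s-p)\times(k+s)$ Gaussian block, with a union bound at level $\Delta/2+\Delta/2$. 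The identification of where each ingredient of $\mathcal{C}_\Delta$ in \eqref{c_delta} comes from (the bracket from the norm bound, the factor $e\sqrt{k+s}\,(2/\Delta)^{1/(p+1)}/(p+1)$ from the near-square smallest-singular-value estimate, the exponent $4q$ from squaring $\|\Sigma_3^{2q+1}\|\,\|\Sigma_1^{-(2q+1)}\|$ after one power of $\Sigma_3\Sigma_1^{-1}$ is absorbed into ${\sigma'}_{k+1+s-p}^{2}$) is accurate. That said, be aware that what you have written is a plan, not a proof: the two load-bearing steps are precisely the ones you defer. The displayed deterministic inequality with the prefactor $k$ and the index shift to $k+1+s-p$ is asserted ``in the refined form of \cite{gu2015subspace},'' which is circular in a reconstruction of that paper's theorem --- it is the content of Gu's deterministic Theorems 4.3--4.4 and requires the full projection computation, not just the remark that $I-QQ^{\mathrm T}$ is invariant under right multiplication of the sketch. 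Likewise the tail bound $\sigma_{\min}(\Omega_{12})^{-1}\le \frac{e\sqrt{k+s}}{p+1}(2/\Delta)^{1/(p+1)}$ with these exact constants is the Chen--Dongarra distributional estimate combined with $\Gamma(p+2)^{1/(p+1)}\ge (p+1)/e$; your route via negative moments and Markov would also work but is not carried out. So the approach is the right one and matches the cited source; nothing beyond the reduction is actually established in the proposal itself.
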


Here $s$ is the over-sampling parameter, and its selection is crucial for the effectiveness of the randomized algorithms.
A small number of columns  are added  to provide  flexibility \cite{halko2011finding}.
The additional parameter $p$ is to balance the need for oversampling for reliability and faster convergence \cite{gu2015subspace}.
In practiceï¼? the orthonormal matrix $Q$ can be selected by adaptive algorithm \cite[Algorithm 4.2]{halko2011finding} combined with the subspace iteration \cite[Algorithm 4.4]{halko2011finding}, as in Algorithm~\ref{algo_tls1} below.
All the operations in the algorithm are implemented in
a flexible fashion that allows the matrix $A$ to be available as a (sparse) matrix or a function handle. We only need the matrix-vector products with $A$ and $A^{\rm T}$, usually efficient for large-scale problems.

\begin{algorithm}[ht]
\caption{Randomized subspace iteration with adaptive range finder}\label{algo_tls1}
\begin{algorithmic}[1]
\Require
 $A\in \mathbb{R}^{m\times n}$, a tolerance $\varepsilon$, an integer $\ell$ (e.g., $\ell = 10$).
\Ensure
Orthonormal matrix $Q$.
\State Draw standard Gaussian vectors $\omega^{(1)},\ldots,\omega^{(\ell)}$ of length $n$.
\State For $i = 1, 2,\ldots, \ell$, compute $y^{(i)} = A\omega^{(i)}$.
\State $j= 0$. $Q^{(0)} = [ ]$, the $m\times0$ empty matrix.
\While {$\max\left\{\left\|y^{(j+1)}\right\|,\left\|y^{(j+2)}\right\|,\ldots, \left\|y^{(j+\ell)}\right\|\right\}>\varepsilon/(10\sqrt{2/\pi})$,}
\State j=j+1.
\State Overwrite $y^{(j)}$ by $(I-Q^{(j-1)}Q^{(j-1){\mathrm T}})y^{(j)}$.
\State $q^{(j)} =y^{(j)}/\|y^{(j)}\|$, $Q^{(j)} = [Q^{(j-1)}, q^{(j)}]$.
\State Draw a standard Gaussian vector $\omega^{(j+\ell)}$ of length $n$.
\State $y^{(j+\ell)} = (I-Q^{(j)}Q^{(j){\mathrm T}})A\omega^{(j+\ell)}$
\For {$i = (j+1), (j+2), \ldots, (j+\ell-1)$,}
\State Overwrite $y^{(i)}$ by $y^{(i)}- q^{(j)}\langle q^{(j)},y^{(i)} \rangle$.
\EndFor
\EndWhile
\State $Q_0 = Q^{(j)}$.
\For {$j=1,2,\ldots,q$}
\State Form $\widetilde{Y}_j = A^{\rm T}Q_{j-1}$ and compute its QR factorization $\widetilde{Y}_j = \widetilde{Q}_j \widetilde{R}_j$.
\State Form $Y_j = A \widetilde{Q}_j$ and compute its QR factorization $Y_j = Q_jR_j$.
\EndFor
\State $Q=Q_r$.
\end{algorithmic}
\end{algorithm}

\subsection{Randomized Core Reduction}
A randomized SVD of $A$ has been given in \cite{halko2011finding}, that is,
$A\approx U_1 \Sigma_1 V_1^{\mathrm T}$, where $U_1\in \mathbb{R}^{m\times r}$, $V_1\in \mathbb{R}^{n\times r}$ are column orthogonal, $\Sigma_1 = {\rm diag}(\sigma_1 I,\ldots,\sigma_t I)$,
the singular values $\sigma_j$ has the multiplicity of $h_j$, and $\sum_{j=1}^t h_j = r$.
Define 
$\Sigma={\rm diag}(\Sigma_1, \bf{0})$
and expand $U_1$ and $V_1$ to orthogonal matrices
$U = [U_1,~ U_2]$ and $V = [V_1,~V_2]$.
Hence, $A \approx U_1\Sigma_1 V_1^{\mathrm T}= U\Sigma V^{\mathrm T}$.
We will follow the process in \cite[Section 2]{paige2006core} for the approximate core problem,
ultimately constrained to have simple singular values.
For the argumented matrix $[b, ~A]$, we have
\begin{eqnarray*}
\quad U^{\mathrm T}\begin{bmatrix}b & A\end{bmatrix}\begin{bmatrix}1 & \\ & V\end{bmatrix}
= \begin{bmatrix}U^{\mathrm T} b & U^{\mathrm T} A V\end{bmatrix}\approx
\begin{bmatrix}U^{\mathrm T} b & \Sigma\end{bmatrix}
= \begin{bmatrix}U_1^{\mathrm T} b & \Sigma_1 & \\ U_2^{\mathrm T} b & & \bf{0}\end{bmatrix}.
\end{eqnarray*}
Assume that $\sigma_1>\sigma_2>\ldots>\sigma_t$ ($r\geq t$), i.e., $\Sigma_1\in \mathbb{R}^{r\times r}$ contains $t$ distinct singular values, and the corresponding partition
$U_1 = [u_1,~ u_2 ,\cdots , u_t ]$.
Please note that $u_j$ is a submatrix of size $m\times h_j$, where $h_j$ is the multiplicity of $\sigma_j$ ($j=1,2,\ldots,t$).
For $i=1,2,\cdots,t$, choose a sequence of Householder transformation $S_i$ such that $S_i u_i^{\mathrm T} b = [\varphi_i, 0,\ldots,0]^{\rm T}$ and $S_{t+1}$ such that $S_{t+1} U_2^{\mathrm T} b = [\varphi_{t+1},0,\dots,0]^{\rm T}$, where $\varphi_i = \|u_i^{\mathrm T} b\|$, $\varphi_{t+1}=\|U_2^{\mathrm T} b\|$. 
It is not necessary to generate $U_2^{\mathrm T} b$ and $S_{t+1}$ exactly, and
$\varphi_{t+1} = \|U_2^{\mathrm T} b\| = \left\|b - U_1U_1^{\mathrm T} b\right\|$.
There exists a permutation matrix $\Pi$ that moves the zero elements of
$[\varphi_1,0,\ldots,0, \ldots,\varphi_t,0,\dots,0,\varphi_{t+1},0,\ldots,0]^{\mathrm T}$ to the bottom of this
vector, leaving the nonzero $[\varphi_1,\ldots, \varphi_t,\varphi_{t+1}]^{\rm T}$ at the top while keeping
the $t\times t$ sub-matrix  $\Sigma_1$ diagonal.
With the orthogonal matrix $S = {\rm diag}(S_1 ,\ldots,S_t, S_{t+1})$,
we produce
\begin{equation*}
\Pi S \begin{bmatrix}U_1^{\mathrm T} b & \Sigma_1 & \\ U_2^{\mathrm T} b & & \bf{0}\end{bmatrix}
\begin{bmatrix} 1 & \\ & S\Pi\end{bmatrix} =
\begin{bmatrix}\varphi_1  & \sigma_1 &&&& \\ \vdots && \ddots &&&\\ \varphi_t &&& \sigma_t && \\ \varphi_{t+1} &&&& 0& \\ \bf{0} &&&& & \Sigma_2 \end{bmatrix}.
\end{equation*}
For the special case with the multiplicity $h_j=1$ ($j=1,2,\ldots, t$), we have $S_j=1$, $\Pi = I$ and $\Sigma_2=\bf{0}$.
Here we assume that $\varphi_i \neq 0$, $i = 1,2,\ldots, t$, as we permute all zeros to the bottom.

Consequently, we have
\begin{equation}\label{svd_A}
\quad\Pi S  U^{\mathrm T}\begin{bmatrix}b & A\end{bmatrix}\begin{bmatrix}1 & \\ & VS\Pi^{\mathrm T}\end{bmatrix}
\approx
\begin{pmat}[{...|.}]
\varphi_1 &\sigma_1&&& &\cr
\vdots &  &\ddots&&  &\cr
\varphi_t  && &\sigma_t&&  \cr
\varphi_{t+1}&&&  0&  &\cr\-
\bf{0}&&&&  \bf{0} &\Sigma_2 \cr
\end{pmat}\triangleq
\begin{pmat}[{.|.}]
b_1 &A_{11}&\bf{0}\cr\-
\bf{0}&  \bf{0} &A_{22} \cr
\end{pmat},
\end{equation}
in the form of \eqref{core_appx} with $P= U S \Pi^{\mathrm T}$ and $Q=VS\Pi^{\mathrm T}$.
The approximate core problem is given by $\{b_1, ~A_{11}\}$, where $b_1\in \mathbb{R}^{t+1}$ and $A_{11} \in \mathbb{R}^{(t+1)\times t}$.
Furthermore, if $\varphi_{t+1} = 0$, then the approximate core problem of TLS problem degenerates to a linear equation with $b_1\in \mathbb{R}^{t}$ and $A_{11} \in \mathbb{R}^{t\times t}$.
The  construction of the permutation matrix $\Pi$ is given by the permutation:
\begin{equation*}
\begin{pmatrix}2 & 3 &  \ldots & t & t+1 \\
h_1 +1 & h_1 +h_2+1 & \ldots & h_1 +\ldots +h_{t-1}+1 & r +1 \end{pmatrix}.
\end{equation*}
For the partition of $\Pi = \begin{bmatrix}\Pi_{11} & \Pi_{12} \\ \Pi_{21} & \Pi_{22}\end{bmatrix}$ with $\Pi_{11}\in \mathbb{R}^{t\times r}$ and $\Pi_{12}\in \mathbb{R}^{t\times (n-r)}$, we know that $\Pi_{12} = 0$.
Usually, the randomized SVD of $A$ is given in the form of full rank decomposition $A\approx U_1 \Sigma_1 V_1^{\mathrm T}$. It is fortunate that we can use $U_1$, $\Sigma_1$ and $V_1$ to obtain the approximate core problem $\{b_1,~A_{11}\}$ and do not need to generate the full orthogonal matrices $U$ and $V$ explicitly.
The approximate solution can be retrieved from $V_1$, $S_1,\ldots,S_t$ and $\Pi_{11}$.
In detail, we have
\begin{equation*}
  \Pi_{11}\begin{bmatrix}
            S_1 &  &  \\
             & \ddots &  \\
             &  & S_t
          \end{bmatrix}U_1^{\mathrm T} A V_1
          \begin{bmatrix}
            S_1 &  &  \\
             & \ddots &  \\
             &  & S_t
          \end{bmatrix}\Pi_{11}^{\mathrm T} = \Sigma_1, \quad
          \Pi_{11}\begin{bmatrix}
            S_1 &  &  \\
             & \ddots &  \\
             &  & S_t
          \end{bmatrix}U_1^{\mathrm T} b =
          \begin{bmatrix}
            \varphi_1 \\
            \vdots \\
            \varphi_t
          \end{bmatrix},
\end{equation*}
and $\varphi_{t+1} = \left\|b-U_1U_1^{\mathrm T} b\right\|$.
Since $\Sigma_1$ is computed by a randomized algorithm, there is little chance to have multiple singular values.
It is reasonable to assume the generic case with $h_j = 1$ ($j = 1,\ldots,t$), $r= t$ and
\begin{equation}\label{A11b1}
  A_{11} = \begin{bmatrix}
  \Sigma_1 \\
  0
\end{bmatrix} = \begin{bmatrix}\sigma_1 && \\ & \ddots &\\ && \sigma_r \\ && 0 \end{bmatrix},
\quad b_1 = \begin{bmatrix}
                            U_1^{\mathrm T} b \\
                            \left\|b-U_1 U_1^{\mathrm T} b\right\|
                          \end{bmatrix}=\begin{bmatrix}\varphi_1\\ \vdots \\ \varphi_r \\ \varphi_{r+1}\end{bmatrix}.
\end{equation}

The computation for small-scale TLS problems can be simplified without the SVD of $[A_{11},~b_1]$ if $A_{11}$ is diagonal as in the form of \eqref{A11b1}. We summarize that in the following lemma.

\begin{lem}\label{lem_svdA}
Consider the close form of the small scale TLS problem \eqref{close_1}.
  If $A_{11}$ and $b_1$ are in the form of \eqref{A11b1},  then the analytic solution of \eqref{close_1} is given by
  \begin{equation}\label{y_ana}
    y= \begin{bmatrix}
      \frac{\sigma_1\varphi_1}{\sigma_1^2-\|C^{-1}\|^{-2}} &
      \cdots &
      \frac{\sigma_r\varphi_r}{\sigma_r^2-\|C^{-1}\|^{-2}}
    \end{bmatrix}^{\mathrm T},
  \end{equation}
  where \[
  C =\begin{bmatrix}
          A_{11} &  b_1
            \end{bmatrix}= \begin{bmatrix}\sigma_1 && &\varphi_1\\ & \ddots &&\vdots \\ && \sigma_r &\varphi_r \\ && 0 & \varphi_{r+1}\end{bmatrix}, \quad
  C^{-1} =\begin{bmatrix}\frac1{\sigma_1} &&&-\frac{\varphi_1}{\sigma_1\varphi_{r+1}}\\ & \ddots &&\\ &&\frac 1 {\sigma_r} &-\frac {\varphi_r} {\sigma_r\varphi_{r+1}} \\ &&& \frac 1{\varphi_{r+1}}\end{bmatrix}.\]
\end{lem}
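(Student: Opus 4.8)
The plan is to reduce the closed form \eqref{close_1} to a diagonal linear system by exploiting the special structure of $A_{11}$ and $b_1$ in \eqref{A11b1}. First I would deal with the shift $\sigma_{r+1}^2([b_1,~A_{11}])$. The augmented matrix $[b_1,~A_{11}]$ appearing in \eqref{close_1} differs from $C=[A_{11},~b_1]$ only by a cyclic permutation of its columns, so the two matrices have the same singular values; in particular $\sigma_{r+1}([b_1,~A_{11}])=\sigma_{\min}(C)$. By direct multiplication one checks that the matrix $C^{-1}$ displayed in the statement is indeed the inverse of $C$ (write $C$ in block-triangular form with leading block $\operatorname{diag}(\sigma_1,\dots,\sigma_r)$, last column $(\varphi_1,\dots,\varphi_r,\varphi_{r+1})^{\mathrm T}$, and zeros elsewhere); hence $C$ is nonsingular and $\sigma_{\min}(C)=1/\sigma_{\max}(C^{-1})=1/\|C^{-1}\|$, so that $\sigma_{r+1}^2([b_1,~A_{11}])=\|C^{-1}\|^{-2}$.

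Next I would use the zero bottom row of $A_{11}$. It gives $A_{11}^{\mathrm T}A_{11}=\operatorname{diag}(\sigma_1^2,\dots,\sigma_r^2)$ and $A_{11}^{\mathrm T}b_1=[\sigma_1\varphi_1,\dots,\sigma_r\varphi_r]^{\mathrm T}$, since the last entry $\varphi_{r+1}$ of $b_1$ is annihilated. Substituting these, together with $\sigma_{r+1}^2([b_1,~A_{11}])=\|C^{-1}\|^{-2}$, into \eqref{close_1} turns it into the diagonal system $(\sigma_i^2-\|C^{-1}\|^{-2})\,y_i=\sigma_i\varphi_i$, $i=1,\dots,r$, whose unique solution is exactly \eqref{y_ana}, provided every coefficient $\sigma_i^2-\|C^{-1}\|^{-2}$ is nonzero.

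Finally I would justify this nonsingularity. The closed form \eqref{close_1} presupposes the interlacing condition $\sigma_r(A_{11})>\sigma_{r+1}([b_1,~A_{11}])$ inherited from $\sigma_n(A)>\sigma_{n+1}([b,~A])$ (the same condition under which \eqref{TLS_close} and \eqref{close_1} are valid); since the $\sigma_i$ are the singular values of $A_{11}$ and $\sigma_{r+1}([b_1,~A_{11}])=\|C^{-1}\|^{-1}$, this reads $\min_i\sigma_i>\|C^{-1}\|^{-1}$, so in fact $\sigma_i^2-\|C^{-1}\|^{-2}>0$ for every $i$ and the diagonal system is uniquely solvable. I expect the only delicate point to be the bookkeeping in the first paragraph — matching $\sigma_{r+1}([b_1,~A_{11}])$ with $\|C^{-1}\|^{-1}$ via permutation invariance of singular values and the reciprocal relation between the singular values of $C$ and $C^{-1}$; once that identification is in place, everything else is a one-line diagonal solve.
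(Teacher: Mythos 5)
Your proposal is correct and follows essentially the same route as the paper: verify the explicit form of $C^{-1}$, identify $\sigma_{r+1}^2([b_1,~A_{11}])=\|C^{-1}\|^{-2}$ via $\sigma_{\min}(C)=1/\|C^{-1}\|$, and reduce \eqref{close_1} to a diagonal solve using the structure of $A_{11}$ and $b_1$. Your write-up is in fact slightly more careful than the paper's, since you explicitly handle the column permutation between $[b_1,~A_{11}]$ and $C=[A_{11},~b_1]$ and justify that the denominators $\sigma_i^2-\|C^{-1}\|^{-2}$ are nonzero via the interlacing condition, two points the paper leaves implicit.
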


\begin{proof}
For the small scale TLS problem \eqref{close_1}, denote the augmented matrix $[A_{11},~b_1]$ as $C$, we have
\begin{equation*}
C\begin{bmatrix} 1 && &-\frac{\varphi_1}{\sigma_1} \\ & \ddots && \vdots \\&&1&-\frac{\varphi_r}{\sigma_r} \\ &&&1 \end{bmatrix}=
\begin{bmatrix}\sigma_1 &&&\\ & \ddots &&\\ &&\sigma_r & \\ &&& \varphi_{r+1}\end{bmatrix}, \quad
C= \begin{bmatrix}\sigma_1 && &\varphi_1\\ & \ddots &&\vdots \\ && \sigma_r &\varphi_r \\ && 0 & \varphi_{r+1}\end{bmatrix}.
        \end{equation*}
We have the exact expression
 \[C^{-1} =
\begin{bmatrix} 1 && &-\frac{\varphi_1}{\sigma_1} \\ & \ddots && \vdots \\&&1&-\frac{\varphi_r}{\sigma_r} \\ &&&1 \end{bmatrix}
\begin{bmatrix}\frac1{\sigma_1} &&&\\ & \ddots &&\\ &&\frac 1 {\sigma_r} & \\ &&& \frac 1{\varphi_{r+1}}\end{bmatrix}
=\begin{bmatrix}\frac1{\sigma_1} &&&-\frac{\varphi_1}{\sigma_1\varphi_{r+1}}\\ & \ddots &&\\ &&\frac 1 {\sigma_r} &-\frac {\varphi_r} {\sigma_r\varphi_{r+1}} \\ &&& \frac 1{\varphi_{r+1}}\end{bmatrix}.\]
The smallest singular value of $C$ can be obtained by $\|C^{-1}\|^{-1}$, i.e., $\sigma_{r+1}(C) = \|C^{-1}\|^{-1}$. The analytical solution of the small-scale TLS problem \eqref{close_1} is given by
\begin{equation*}
  y = \left(\Sigma_1^{\mathrm T} \Sigma_1 - \sigma_{r+1}^2\left(C\right) I_r\right)^{-1}\Sigma_1^{\mathrm T} (U_1^{\mathrm T} b)
  = \begin{bmatrix}
      \frac{\sigma_1\varphi_1}{\sigma_1^2-\|C^{-1}\|^{-2}} &
      \cdots &
      \frac{\sigma_r\varphi_r}{\sigma_r^2-\|C^{-1}\|^{-2}}
    \end{bmatrix}^{\mathrm T}.
\end{equation*}
\end{proof}

Then the approximate solution $\hat{x}$ of \eqref{TLS} is
\begin{eqnarray}
  \hat{x}&=& V_1 y = V_1\left(\Sigma_1^{\mathrm T} \Sigma_1 - \sigma_{r+1}^2\left(\begin{bmatrix}
                                                    \Sigma_1 & U_1^{\mathrm T} b \\
                                                    0 & \left\|b-U_1 U_1^{\mathrm T} b\right\|
                                                  \end{bmatrix}\right) I_r\right)^{-1}\Sigma_1^{\mathrm T} (U_1^{\mathrm T} b)\nonumber \\
     & =&V_1 \begin{bmatrix}
      \dfrac{\sigma_1\varphi_1}{\sigma_1^2-\|C^{-1}\|^{-2}} &
      \cdots &
      \dfrac{\sigma_r\varphi_r}{\sigma_r^2-\|C^{-1}\|^{-2}}
    \end{bmatrix}^{\mathrm T}. \label{x_appx}
\end{eqnarray}
If the exact SVD of $A$ is known, then the analytical solution of TLS problem within $\{b, ~A\}$ is given in \eqref{x_appx} with $r=n$ and $V_1 = V$. We present the respective algorithm as Algorithm~\ref{algo_tls2}.

\begin{algorithm}[ht]
\caption{Randomized TLS with randomized $A$}\label{algo_tls2}
\begin{algorithmic}[1]
\Require
 $A\in \mathbb{R}^{m\times n}$,  $b\in \mathbb{R}^{n}$.
\Ensure
Orthonormal $U_1 \in \mathbb{R}^{m\times r}$ and  $V_1\in \mathbb{R}^{n\times r}$, diagonal matrix $\Sigma_1\in \mathbb{R}^{r\times r}$, approximate solution  $\hat{x}$.
\State Compute randomized SVD, $A\approx U_1\Sigma_1 V_1^{\mathrm T}$:
\State \quad The $m\times r$ orthonormal matrix $Q_1$ selected by Algorithm \ref{algo_tls1};
\State \quad Form the $r\times n$ matrix $Q_1^{\mathrm T} A$; Compute the SVD of $Q_1^{\mathrm T} A= U_1\Sigma_1V_1^{\mathrm T}$; $U_1$ is updated by $U_1=Q_1U_1$.
\State Solve the small-scale TLS problem \[\begin{bmatrix}
                                            \Sigma_1 \\
                                            \bf{0}
                                          \end{bmatrix} y \approx \begin{bmatrix}
                                                                    U^{\mathrm T} b \\
                                                                    \left\|b-U_1 U_1^{\mathrm T} b\right\|
                                                                  \end{bmatrix},\]
                                          with $y$ in the form of \eqref{y_ana}.
\State Form the approximate solution $\hat{x}=  V_1y$.
\end{algorithmic}
\end{algorithm}

\begin{rem}
  In this case, if the coefficient matrix $A$ is numerically low-rank with the approximate SVD (e.g. randomized SVD) $A \approx U_1 \Sigma_1 V_1^{\mathrm T}\triangleq A_r$, the approximate TLS solution of problem \eqref{TLS} is in the form of \eqref{x_appx}.
It is easy to check that
\begin{equation*}\label{eq_Ab_s_min}
\sigma_{r+1}\left(\begin{bmatrix}
                              A_r & b
                            \end{bmatrix}\right)
 = \sigma_{r+1}\left(\begin{bmatrix}
       \Sigma_1  & U_1^{\mathrm T} b \\
        \mathbf{0} &  U_2^{\mathrm T} b
     \end{bmatrix}\right)
                       =\sigma_{r+1}\left(\begin{bmatrix}
                                   \Sigma_1 & U_1^{\mathrm T} b \\
                                   0 & \left\|b-U_1U_1^{\mathrm T}  b\right\|
                                 \end{bmatrix}\right)=\sigma_{r+1}(A_{11},~b_1),
\end{equation*}
and the approximate solution $\hat{x}$ in \eqref{x_appx} can be treated as the minimum norm solution of
\begin{equation*}\label{TLS_r}
  \left[A_r^{\mathrm T} A_r - \sigma_{r+1}^2([A_r, ~b]) I_n \right] x = A_r^{\mathrm T} b.
\end{equation*}
\end{rem}

\begin{rem}
  Since the construction of a core problem within  $AX\approx B$ with multiple right-hand sides is based on the SVD of $A$ \cite{hnetynkova2013core},
  the approximate core problem may be generalized for multiple right-hand sides using the randomized SVD of $A$.
  This process is more complicated and we leave it for the future.
\end{rem}

\section{Error Analysis}\label{sec_error}

Based on the perturbation analysis of linear system in \cite[Section~2.6.2]{Golub2013matrix} and \cite[Section 1.4]{deif1986sensitivity}, we derive the error analyses for the randomized TLS algorithms.
For the sensitivity analysis of TLS problem \eqref{TLS_close}, the smallest singular value $\sigma_{n}^2(A) - \sigma_{n+1}^2([A,~b])$, or $\|(A^{\mathrm T} A - \sigma_{n+1}^2([A,~b]) I_n)^{-1}\|$ is crucial \cite[Corollary~1]{Baboulin2011}. After the randomized projection in Algorithm \ref{algo_tls2}, the smallest singular values of $A$ are discarded, thus the condition number is improved simultaneously, i.e., $\|(A_{11}^{\mathrm T} A_{11} - \sigma_{r+1}^2([A_{11},~b_1])I_r)^{-1}\|\leq \|(A^{\mathrm T} A - \sigma_{n+1}^2([A,~b]) I_n)^{-1}\|$. When required we may use a restart strategy to remove the ill-conditioning by perturbing the parameter $r$ slightly.
\begin{lem}\emph{\cite[Section~2.6.2]{Golub2013matrix}}\label{lem_perturb}
Let $A\in \mathbb{R}^{n\times n}$ be nonsingular and consider the equation $Ax=b$. If $A$ and $b$ are perturbed by infinitesimal $\Delta A$ and $\Delta b$, the solution $x$ changes by infinitesimal $\Delta x$, where
\begin{equation*}
(A+\Delta A) (x+\Delta x) = b+\Delta b.
\end{equation*}
If the spectral radius of $A^{-1}\Delta A$ is less than unity, we obtain that, upon neglecting second order terms,
\begin{equation*}
\Delta x \approx A^{-1} [\Delta b- \Delta A x], 
\quad
  \frac{\|\Delta x\|}{\|x\|} \leq\left\|A^{-1}\right\| \left(\frac{\|\Delta b\|}{\|x\|}+\| \Delta A\|\right).
\end{equation*}
\end{lem}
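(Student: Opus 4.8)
The plan is to carry out the standard first-order perturbation expansion. First I would subtract the unperturbed equation $Ax = b$ from the perturbed one $(A+\Delta A)(x+\Delta x) = b+\Delta b$. Expanding the left-hand side gives $Ax + A\,\Delta x + \Delta A\, x + \Delta A\, \Delta x = b + \Delta b$, and cancelling $Ax = b$ leaves $A\,\Delta x = \Delta b - \Delta A\, x - \Delta A\, \Delta x$. The term $\Delta A\, \Delta x$ is second order in the size of the perturbation (it is of order $\|\Delta A\|\,\|\Delta x\|$, and $\Delta x$ is itself of order $\|\Delta A\| + \|\Delta b\|$), so upon discarding it we obtain $A\,\Delta x \approx \Delta b - \Delta A\, x$.

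Next I would invert $A$. This is where the hypothesis $\rho(A^{-1}\Delta A) < 1$ is used: it guarantees that $I + A^{-1}\Delta A$ is nonsingular (its eigenvalues are $1+\lambda$ with $|\lambda|<1$, hence nonzero; equivalently the Neumann series $\sum_{k\ge 0}(-A^{-1}\Delta A)^k$ converges), so that $A+\Delta A = A\,(I + A^{-1}\Delta A)$ is nonsingular and a unique $\Delta x$ genuinely exists. Left-multiplying the approximate identity by $A^{-1}$ then yields $\Delta x \approx A^{-1}\,(\Delta b - \Delta A\, x)$, which is the first displayed conclusion.

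Finally I would pass to norms. Submultiplicativity of the $2$-norm together with the triangle inequality gives $\|\Delta x\| \le \|A^{-1}\|\,(\|\Delta b\| + \|\Delta A\|\,\|x\|)$, and dividing through by $\|x\|$ produces the relative bound $\frac{\|\Delta x\|}{\|x\|} \le \|A^{-1}\|\left(\frac{\|\Delta b\|}{\|x\|} + \|\Delta A\|\right)$.

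The only genuinely delicate point — the one I would state precisely rather than grind through — is the meaning of ``neglecting second-order terms'': the estimates are asymptotic, valid as $\|\Delta A\|,\|\Delta b\|\to 0$, and the discarded $\Delta A\,\Delta x$ contributes a correction of order $\|A^{-1}\|\,\|\Delta A\|\,\|\Delta x\|$, which is dominated by $\|\Delta x\|$ itself once $\|A^{-1}\|\,\|\Delta A\| < 1$. Everything else is routine linear algebra, and since the statement is quoted verbatim from \cite[Section~2.6.2]{Golub2013matrix} one could equally well simply refer there.
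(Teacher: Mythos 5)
Your proposal is correct and is exactly the standard first-order perturbation argument; the paper itself gives no proof of this lemma, simply citing \cite[Section~2.6.2]{Golub2013matrix}, where the derivation is the same one you outline (subtract $Ax=b$, drop the $\Delta A\,\Delta x$ term, invert $A$, take norms). Your remark on the role of the hypothesis $\rho(A^{-1}\Delta A)<1$ — guaranteeing that $A+\Delta A$ is nonsingular so that $\Delta x$ exists — is the right reading of that assumption.
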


The theorem is given below.
\begin{thm}
  Suppose that $x_*$ and $\hat{x}$ are the solution of the TLS problem within $\{b,~A\}$ and $\{b,~A_r\}$ respectively, i.e., the minimum norm solutions of
  \begin{eqnarray*}
    (A^{\mathrm T} A -\sigma^2_{n+1}([A,~b])I_n)x = A^{\mathrm T} b, \quad
    (A_r^{\mathrm T} A_r - \sigma^2_{r+1}([b, ~A_r])I_n)x= A_r^{\mathrm T} b,
  \end{eqnarray*}
  where $A_r$ is obtained by the randomized SVD in Algorithm~\ref{algo_tls2}, satisfying $\|A-A_r\|\leq \sqrt{1+k\epsilon^2}\sigma_{k+1}(A)$ by Lemma~\ref{lem_Qgu}.
  Then error of the approximate solution can be bounded as follows:
  \begin{equation*}
    \frac{\|\hat{x}-x_*\|}{\|x_*\|}\leq \left\|(A^{\mathrm T} A -\sigma^2_{n+1}([A,~b])I_n)^{-1}\right\|\left[ \left(\frac{\|b\|}{\|x_*\|}
    +2\|A\|\right)\sqrt{1+k\epsilon^2} + 2\sigma_{k+s}(A)\right]\sigma_{k+1}(A),
  \end{equation*}
with probability not less than $1-\Delta$. Here
$\epsilon=\mathcal{C}_\Delta(\sigma_{k+1+s-p}(A) /\sigma_k(A))^{2q}$
 with $\mathcal{C}_\Delta$  defined in \eqref{c_delta}.
\end{thm}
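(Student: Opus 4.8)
The plan is to regard the normal equations defining $\hat{x}$ as an infinitesimal perturbation of those defining $x_*$ and feed the sizes of the perturbations into Lemma~\ref{lem_perturb}. Put $M=A^{\mathrm T}A-\sigma_{n+1}^2([A,b])I_n$, $c=A^{\mathrm T}b$, $M_r=A_r^{\mathrm T}A_r-\sigma_{r+1}^2([b,A_r])I_n$, $c_r=A_r^{\mathrm T}b$, so that $Mx_*=c$ and $M_r\hat{x}=c_r$. Under the TLS gap assumption $M$ is nonsingular (and, as noted just before the theorem, $\|M_r^{-1}\|\le\|M^{-1}\|$), and when $A_r$ is sufficiently close to $A$ the spectral radius of $M^{-1}(M_r-M)$ is below one; then Lemma~\ref{lem_perturb}, applied with $\Delta M=M_r-M$ and $\Delta c=c_r-c$, gives
\[
\frac{\|\hat{x}-x_*\|}{\|x_*\|}\le\|M^{-1}\|\left(\frac{\|\Delta c\|}{\|x_*\|}+\|\Delta M\|\right),
\]
where $\|M^{-1}\|=\|(A^{\mathrm T}A-\sigma_{n+1}^2([A,b])I_n)^{-1}\|$. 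It remains to bound $\|\Delta c\|$ and $\|\Delta M\|$ in terms of $\|A-A_r\|$, and then insert $\|A-A_r\|\le\sqrt{1+k\epsilon^2}\,\sigma_{k+1}(A)$ from Lemma~\ref{lem_Qgu}.

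For the right-hand side, $\Delta c=(A_r-A)^{\mathrm T}b$, so $\|\Delta c\|\le\|A-A_r\|\,\|b\|\le\sqrt{1+k\epsilon^2}\,\sigma_{k+1}(A)\,\|b\|$. For $\Delta M$ I would split $\Delta M=(A_r^{\mathrm T}A_r-A^{\mathrm T}A)-\bigl(\sigma_{r+1}^2([b,A_r])-\sigma_{n+1}^2([A,b])\bigr)I_n$. The Gram part is handled by $A_r^{\mathrm T}A_r-A^{\mathrm T}A=A_r^{\mathrm T}(A_r-A)+(A_r-A)^{\mathrm T}A$ together with $\|A_r\|\le\|A\|$ (since $A_r=QQ^{\mathrm T}A$ for the orthonormal $Q$ produced by the range finder and $QQ^{\mathrm T}$ is an orthogonal projector), which yields $\|A_r^{\mathrm T}A_r-A^{\mathrm T}A\|\le2\|A\|\,\|A-A_r\|\le2\|A\|\sqrt{1+k\epsilon^2}\,\sigma_{k+1}(A)$.

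The only step that is not pure bookkeeping is the difference of the TLS shift parameters, $\bigl|\sigma_{r+1}^2([b,A_r])-\sigma_{n+1}^2([A,b])\bigr|$, which I would control by Cauchy interlacing. Deleting the column $b$ from $[b,A_r]$ and from $[A,b]$ gives $\sigma_{r+1}([b,A_r])\le\sigma_r(A_r)$ and $\sigma_{n+1}([A,b])\le\sigma_n(A)$; since $A_r=QQ^{\mathrm T}A$ one has $\sigma_r(A_r)\le\sigma_r(A)=\sigma_{k+s}(A)$ (here $r=k+s$ as in Lemma~\ref{lem_Qgu}), while $\sigma_n(A)\le\sigma_{k+s}(A)$ because $n\ge k+s$. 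Hence both shifts lie in $[0,\sigma_{k+s}^2(A)]$, so $\bigl|\sigma_{r+1}^2([b,A_r])-\sigma_{n+1}^2([A,b])\bigr|\le\sigma_{r+1}^2([b,A_r])+\sigma_{n+1}^2([A,b])\le2\sigma_{k+s}^2(A)\le2\sigma_{k+s}(A)\,\sigma_{k+1}(A)$, using $\sigma_{k+s}(A)\le\sigma_{k+1}(A)$. Combining the three estimates, factoring $\sigma_{k+1}(A)$ out of the bracket, and noting that only $\|A-A_r\|\le\sqrt{1+k\epsilon^2}\sigma_{k+1}(A)$ is probabilistic (it holds with probability at least $1-\Delta$ by Lemma~\ref{lem_Qgu}; all the rest is deterministic) reproduces exactly the stated inequality.

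The main obstacle is one of rigor rather than of any single estimate: Lemma~\ref{lem_perturb} is a first-order statement, so strictly the bound holds only up to higher-order terms in $\|A-A_r\|$, and one must separately justify invertibility of $M$ and $M_r$ and the condition $\rho(M^{-1}\Delta M)<1$, which is precisely where the TLS gap hypothesis together with the smallness of $\|A-A_r\|$ enters. The genuinely new ingredient, beyond the triangle inequality and submultiplicativity, is the interlacing argument that produces the $\sigma_{k+s}(A)$ contribution in the shift term.
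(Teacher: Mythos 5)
Your proposal is correct and follows essentially the same route as the paper: the same perturbation lemma applied to the shifted normal equations, the same splitting of $\Delta M$ into the Gram part and the shift part, the bound $\|A_r^{\mathrm T}A_r-A^{\mathrm T}A\|\le 2\|A\|\,\|A-A_r\|$, and the same interlacing argument giving the $2\sigma_{k+s}(A)\sigma_{k+1}(A)$ contribution (the paper factors the difference of squares rather than bounding by the sum, but both land on $2\sigma_r^2(A)$ with $r=k+s$). Your closing remark about the first-order nature of Lemma~\ref{lem_perturb} and the need for $\rho(M^{-1}\Delta M)<1$ is a caveat the paper silently shares rather than a gap in your argument.
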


\begin{proof}
For the perturbation of the coefficient matrix, denote
\begin{equation*}
\begin{split}
 \Delta A &=(A^{\mathrm T} A -\sigma^2_{n+1}([A,~b])I_n)-(A_r^{\mathrm T} A_r - \sigma^2_{r+1}([A_r,~b])I_n)\\
 &= (A^{\mathrm T} A-A_r^{\mathrm T} A_r)+\left(\sigma^2_{r+1}([A_r,~b])-\sigma^2_{n+1}([A,~b])\right)I_n,
 \end{split}
\end{equation*}
 The matrix $A_r$ is obtained by the randomized SVD in Algorithm \ref{algo_tls2}, so $A_r= QQ^{\mathrm T} A$. We have $\|A_r\|\leq \|A\|$ and $\sigma_r(A_r) \leq \sigma_r(A)$ by the interlacing property \cite[Theorem 1]{thompson1972principal}.
Then we can compute that
  \begin{eqnarray*}
    \left\|A^{\mathrm T} A - A_r^{\mathrm T} A_r\right\|  &=&\left\|A^{\mathrm T} (A-A_r) + (A-A_r)^{\mathrm T} A_r\right\|\leq \|A-A_r\|\cdot(\|A\|+\|A_r\|)
    \leq 2\|A\|\cdot\|A-A_r\|.
  \end{eqnarray*}
 Again by the interlacing property \cite[Theorem 1]{thompson1972principal}, we have
  \begin{eqnarray*}
\sigma^2_{r+1}([A_r,~b])-\sigma^2_{n+1}([A,~b])& \!=\!& (\sigma_{r+1}([A_r,~b])-\sigma_{n+1}([A,~b]))(\sigma_{r+1}([A_r,~b])+\sigma_{n+1}([A,~b])) \\
&\!\leq\!&   \sigma_{r}(A_r) (\sigma_{r}(A_r)+\sigma_{n}(A)) \leq 2\sigma_r^2(A).
  \end{eqnarray*}
  Then we get
    $\|\Delta A\|\leq  2\|A\|\cdot\|A-A_r\|  + 2\sigma^2_{r}(A)$.

 The perturbation of the right-hand side $\Delta b = A^{\mathrm T} b-A_r^{\mathrm T} b$ satisfies
  \begin{equation*}
  \|\Delta b\|=\|A^{\mathrm T} b-A_r^{\mathrm T} b\|\leq \|A-A_r\|\cdot \|b\|.
  \end{equation*}
  Since $A_r$ is obtained by the randomized SVD of $A$ and satisfies $\|A-A_r\|\!\leq\!\! \sqrt{1+k\epsilon^2}\sigma_{k+1}(A)$ by \eqref{est_Qnew},
   from Lemma \ref{lem_perturb},
  we can get
  \begin{eqnarray*}
    \frac{\|\hat{x}-x_*\|}{\|x_*\|} &\leq&
    \left\|(A^{\mathrm T} A -\sigma^2_{n+1}([A,~b])I_n)^{-1}\right\|
    \left[\left(\frac{\|b\|}{\|x_*\|} + 2\|A\|\right)\|A-A_r\| + 2\sigma^2_{r}(A)\right]\\
     &\leq& \left\|(A^{\mathrm T} A -\sigma^2_{n+1}([A,~b])I_n)^{-1}\right\|
    \left[\left(\frac{\|b\|}{\|x_*\|} + 2\|A\|\right)\sqrt{1+k\epsilon^2}\sigma_{k+1}(A) + 2\sigma^2_{r}(A)\right]\\
    &\leq& \left\|(A^{\mathrm T} A -\sigma^2_{n+1}([A,~b])I_n)^{-1}\right\|
    \left[\left(\frac{\|b\|}{\|x_*\|} + 2\|A\|\right)\sqrt{1+k\epsilon^2} + 2\sigma_{r}(A)\right]\sigma_{k+1}(A),
  \end{eqnarray*}
  with probability not less than $1-\Delta$.
  Here we have
  \begin{small}
  \begin{equation*}
  \begin{split}
    \epsilon&=\mathcal{C}_\Delta\left(\frac{\sigma_{k+1+s-p}(A)}{\sigma_k(A)}\right)^{2q}
       =
  \frac{e\sqrt{k+s}}{p+1}\left(\frac{2}{\Delta}\right)^{\frac{1}{p+1}}
  \left(\sqrt{n-k-s+p}+\sqrt{k+s}+\sqrt{2\log\frac{2}{\Delta}}\right)\left(\frac{\sigma_{k+1+s-p}(A)}{\sigma_k(A)}\right)^{2q},
  \end{split}
  \end{equation*}
  \end{small}
 with $\mathcal{C}_\Delta$ defined in \eqref{c_delta}.
\end{proof}

\begin{rem}
  From Algorithm \ref{algo_tls2}, $A_r$ is obtained by the randomized SVD of $A$ with $A_r =U_1\Sigma_1 V_1^{\mathrm T}=Q_1Q_1^{\mathrm T}A$. So the TLS solution $\hat{x}$ within $\{b,~A_r\}$ is exactly the approximate solution in Algorithm~\ref{algo_tls2}.
  The parameter $\epsilon$ will be small enough if the singular values of $A$ decay fast or we may use larger parameter $q$ to accelerate the decay.
  The theorem shows that if the coefficient matrix $A$ is of numerical low-rank, i.e., there exist $k$ such that $\sigma_{k+1}(A)$  is small enough, a good approximate solution of the TLS problem can be obtained by randomized algorithms.
  Various approximation properties relay on the fast decay of the singular values.
\end{rem}

Note that the randomized core reduction is a regularization method, so we prove that this method can give a good estimation of the truncated TLS solution $\tilde{x}$ in \eqref{x_trun}, if $\sigma_{k+1}(A)$ is small enough.
We quote the classical perturbation result on the Moore-Penrose inverse in the following lemma.

\begin{lem}\emph{\cite[Theorem 2.1]{wedin1973perturbation}}\label{lem_pseudoinverse}
 Take $T=B-A$, then
 \begin{equation}\label{pseudoBA}
   B^\dag -A^\dag = -B^\dag T A^\dag + (B^{\mathrm T} B)^\dag T^{\mathrm T} (I-AA^\dag) + (I-B^\dag B)T^{\mathrm T}(AA^{\mathrm T})^\dag.
 \end{equation}
\end{lem}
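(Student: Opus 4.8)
The plan is to prove the identity by pure algebraic manipulation of the four Penrose conditions, with no appeal to singular value decompositions. The facts I would lean on are: for any matrix $X$, the matrices $XX^\dag$ and $X^\dag X$ are symmetric idempotents, $XX^\dag X=X$, $X^\dag XX^\dag=X^\dag$, and the two factorizations $X^\dag=(X^{\mathrm T}X)^\dag X^{\mathrm T}=X^{\mathrm T}(XX^{\mathrm T})^\dag$. Before touching the main identity I would record two auxiliary identities that do all the real work: $A^{\mathrm T}(I-AA^\dag)=0$ (since $A^{\mathrm T}AA^\dag=A^{\mathrm T}(AA^\dag)^{\mathrm T}=(AA^\dag A)^{\mathrm T}=A^{\mathrm T}$ using symmetry of $AA^\dag$) and, symmetrically, $(I-B^\dag B)B^{\mathrm T}=0$.

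First I would expand the cross term on the right: $-B^\dag T A^\dag=-B^\dag(B-A)A^\dag=-B^\dag BA^\dag+B^\dag AA^\dag$. Moving it across, the assertion is equivalent to
\[
B^\dag-A^\dag+B^\dag BA^\dag-B^\dag AA^\dag=(B^{\mathrm T}B)^\dag T^{\mathrm T}(I-AA^\dag)+(I-B^\dag B)T^{\mathrm T}(AA^{\mathrm T})^\dag,
\]
and the left-hand side regroups cleanly as $B^\dag(I-AA^\dag)-(I-B^\dag B)A^\dag$. Then I would handle the two pieces separately. For the first, substitute $B^\dag=(B^{\mathrm T}B)^\dag B^{\mathrm T}$ and $B^{\mathrm T}=A^{\mathrm T}+T^{\mathrm T}$ to get $B^\dag(I-AA^\dag)=(B^{\mathrm T}B)^\dag(A^{\mathrm T}+T^{\mathrm T})(I-AA^\dag)=(B^{\mathrm T}B)^\dag T^{\mathrm T}(I-AA^\dag)$, the $A^{\mathrm T}$ term vanishing by the first auxiliary identity. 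For the second, substitute $A^\dag=A^{\mathrm T}(AA^{\mathrm T})^\dag$ and $A^{\mathrm T}=B^{\mathrm T}-T^{\mathrm T}$ to get $(I-B^\dag B)A^\dag=(I-B^\dag B)(B^{\mathrm T}-T^{\mathrm T})(AA^{\mathrm T})^\dag=-(I-B^\dag B)T^{\mathrm T}(AA^{\mathrm T})^\dag$, the $B^{\mathrm T}$ term vanishing by the second auxiliary identity. Adding the two pieces with the signs above reproduces the claimed formula exactly.

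There is no genuine obstacle here: once the two auxiliary identities $A^{\mathrm T}(I-AA^\dag)=0$ and $(I-B^\dag B)B^{\mathrm T}=0$ are in hand the argument is only a few lines. The one point requiring care is the bookkeeping of left versus right projectors — choosing which of the two Penrose factorizations of the pseudoinverse to apply to which term, and correspondingly which residual projector annihilates $A^{\mathrm T}$ (or $B^{\mathrm T}$) and on which side — since interchanging the roles silently yields a wrong but superficially plausible formula. For this reason I would keep the range-side identity ($A^{\mathrm T}$ killed on the right by $I-AA^\dag$) and the domain-side identity ($B^{\mathrm T}$ killed on the left by $I-B^\dag B$) explicitly labelled throughout.
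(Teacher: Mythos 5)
Your proof is correct: the two auxiliary identities $A^{\mathrm T}(I-AA^\dag)=0$ and $(I-B^\dag B)B^{\mathrm T}=0$, the regrouping of $B^\dag-A^\dag+B^\dag TA^\dag$ as $B^\dag(I-AA^\dag)-(I-B^\dag B)A^\dag$, and the two substitutions $B^\dag=(B^{\mathrm T}B)^\dag B^{\mathrm T}$ and $A^\dag=A^{\mathrm T}(AA^{\mathrm T})^\dag$ all check out, with the signs landing exactly on the claimed formula. The paper itself gives no proof (it cites Wedin's Theorem 2.1 directly), and your argument is essentially the standard derivation of that decomposition, so there is nothing to reconcile.
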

This lemma doesn't require the equal rank of matrices $A$ and $B$. In the truncated TLS, we choose $k+1$ as the truncation parameter of matrix $[A,~b]$ since $\sigma_{k+1}(A)$ is assumed to be small.

\begin{thm}\label{thm_core_ttls}
  Suppose that $\hat{x}$ and $\tilde{x}$ are the solutions of the randomized core reduction and truncated TLS problem  within original $\{b,~A\}$ respectively, i.e., the minimum norm solutions of
  \begin{eqnarray*}
    (A_r^{\mathrm T} A_r - \sigma_{r+1}^2([A_r, ~b]) I_n )x =  A_r^{\mathrm T} b, \quad
    \tilde{A}^{\mathrm T} \tilde{A} x = \tilde{A}^{\mathrm T} b,
  \end{eqnarray*}
  where $A_r$ is obtained by randomized SVD in Algorithm \ref{algo_tls2}, satisfying $\|A-A_r\|\leq \sqrt{1+k\epsilon^2}\sigma_{k+1}(A)$ by Lemma \ref{lem_Qgu} and $[\tilde{A},~\tilde{b}] = \sum_{i=1}^{k+1}\bar{u}_i\bar{\sigma}_i\bar{v}_i^{\mathrm T}$ from the SVD of $[A,~b]$.
  Then error of the approximate solution can be bounded as follows,
\begin{equation*}
 \frac{\|\hat{x}-\tilde{x}\|}{\|\tilde{x}\|}  \leq \frac{\sigma_{k+1}(A)}{\sigma_{r}^2(A_r)-\sigma_{r+1}^2([A_r, b])}
     \left[c_1\frac{\|b\|}{\|\tilde{x}\|} + c_2\|A\|\left(1+\frac{\|A\|\|\tilde{r}\|/\|\tilde{x}\|}{\sigma_{r}^2(A_r)-\sigma_{r+1}^2([A_r, b])}\right)\right],
\end{equation*}
with probability not less than $1-\Delta$. Here
$\epsilon=\mathcal{C}_\Delta(\sigma_{k+1+s-p}(A) /\sigma_k(A))^{2q}$
 and $\mathcal{C}_\Delta$ defined in \eqref{c_delta}.
 Furthermore, the residual $\hat{r}= b-A\hat{x}$ of the randomized core reduction is estimated as follows,
 \begin{equation*}
   \|\hat{r}\|\leq   c_1\sigma_{k+1}(A)\sqrt{1+\|\hat{x}\|^2},
 \end{equation*}
 with the constants $c_1 = 1+\sqrt{1+k\epsilon^2}$ and  $c_2 = 2c_1+1$.
\end{thm}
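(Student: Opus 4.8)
The plan is to treat the two assertions separately, beginning with the residual since it is self‑contained.

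\textbf{Residual bound.} Split $\hat{r}=b-A\hat{x}=(b-A_r\hat{x})-(A-A_r)\hat{x}$. The first piece is the residual of the \emph{exact} TLS problem carried by the approximate core $\{b_1,A_{11}\}$: back‑transforming $\hat{x}=V_1y$ through the orthogonal factor in \eqref{svd_A}, and using that the $A_{22}$‑block has a zero right‑hand side, gives $\|b-A_r\hat{x}\|=\|b_1-A_{11}y\|$; the closed form in Lemma~\ref{lem_svdA} (equivalently, the standard TLS residual identity) then yields $\|b_1-A_{11}y\|=\sigma_{\min}([A_{11},b_1])\sqrt{1+\|y\|^2}$ with $\sigma_{\min}([A_{11},b_1])=\|C^{-1}\|^{-1}=\sigma_{r+1}([A_r,b])$ and $\|y\|=\|\hat{x}\|$. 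The projection property $\sigma_i(A_r)\le\sigma_i(A)$ (valid since $A_r=QQ^{\mathrm T}A$) together with column‑interlacing gives $\sigma_{r+1}([A_r,b])\le\sigma_r(A_r)\le\sigma_r(A)\le\sigma_{k+1}(A)$ whenever $r\ge k+1$. For the second piece, $\|(A-A_r)\hat{x}\|\le\|A-A_r\|\,\|\hat{x}\|\le\sqrt{1+k\epsilon^2}\,\sigma_{k+1}(A)\,\|\hat{x}\|$ by Lemma~\ref{lem_Qgu}. Adding the two and using $\|\hat{x}\|\le\sqrt{1+\|\hat{x}\|^2}$ produces $\|\hat{r}\|\le(1+\sqrt{1+k\epsilon^2})\,\sigma_{k+1}(A)\sqrt{1+\|\hat{x}\|^2}=c_1\sigma_{k+1}(A)\sqrt{1+\|\hat{x}\|^2}$.

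\textbf{Solution bound.} I would interpose the pseudoinverse solution of the randomized matrix: $\hat{x}-\tilde{x}=(\hat{x}-A_r^\dag b)+(A_r^\dag b-\tilde{x})$. For the first summand, set $\mu:=\sigma_{r+1}([A_r,b])$ and use the closed forms $\hat{x}=V_1(\Sigma_1^2-\mu^2 I)^{-1}\Sigma_1U_1^{\mathrm T}b$ from \eqref{x_appx} and $A_r^\dag b=V_1\Sigma_1^{-1}U_1^{\mathrm T}b$; subtracting, $\hat{x}-A_r^\dag b=\mu^2 V_1(\Sigma_1^2-\mu^2 I)^{-1}\Sigma_1^{-1}U_1^{\mathrm T}b$, so $\|\hat{x}-A_r^\dag b\|\le\frac{\mu^2}{(\sigma_r^2(A_r)-\mu^2)\,\sigma_r(A_r)}\|b\|\le\frac{\sigma_{k+1}(A)}{D}\|b\|$, where $D:=\sigma_r^2(A_r)-\sigma_{r+1}^2([A_r,b])>0$ and I used $\mu\le\sigma_r(A_r)\le\sigma_{k+1}(A)$. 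For the second summand, $A_r^\dag b-\tilde{x}=(A_r^\dag-\tilde{A}^\dag)b$, I would apply the perturbation identity \eqref{pseudoBA} of Lemma~\ref{lem_pseudoinverse} with $B=A_r$, $A=\tilde{A}$, $T=A_r-\tilde{A}$; since $\|A-\tilde{A}\|\le\bar{\sigma}_{k+2}=\sigma_{k+2}([A,b])\le\sigma_{k+1}(A)$, one gets $\|T\|\le\|A_r-A\|+\|A-\tilde{A}\|\le c_1\sigma_{k+1}(A)$. The first Wedin term is $-A_r^\dag T\tilde{x}$, with $\|A_r^\dag\|=1/\sigma_r(A_r)\le\|A\|/D$; the second is $(A_r^{\mathrm T}A_r)^\dag T^{\mathrm T}(I-\tilde{A}\tilde{A}^\dag)b$, in which $(I-\tilde{A}\tilde{A}^\dag)b=b-\tilde{A}\tilde{x}=\tilde{r}$ and $\|(A_r^{\mathrm T}A_r)^\dag\|=1/\sigma_r^2(A_r)\le\|A\|^2/D^2$; the third, $(I-A_r^\dag A_r)T^{\mathrm T}(\tilde{A}\tilde{A}^{\mathrm T})^\dag b$, collapses via $\tilde{A}^{\mathrm T}(\tilde{A}\tilde{A}^{\mathrm T})^\dag=\tilde{A}^\dag$ and $(I-A_r^\dag A_r)A_r^{\mathrm T}=0$ to $-(I-V_1V_1^{\mathrm T})\tilde{x}$. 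Collecting the first summand with the first two Wedin terms, using $\sigma_r(A_r)\le\|A\|$ and the (slightly loose) bound $\|T\|\le c_2\sigma_{k+1}(A)$, and dividing by $\|\tilde{x}\|$, reproduces the claimed estimate $\frac{\sigma_{k+1}(A)}{D}\big[c_1\frac{\|b\|}{\|\tilde{x}\|}+c_2\|A\|\big(1+\frac{\|A\|\,\|\tilde{r}\|/\|\tilde{x}\|}{D}\big)\big]$.

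\textbf{Main obstacle.} The delicate point is the third Wedin term, $-(I-V_1V_1^{\mathrm T})\tilde{x}$: it is the part of the truncated‑TLS solution lying outside the row space $\mathcal{R}(A_r^{\mathrm T})=\mathcal{R}(V_1)$ selected by the randomized algorithm, and it does not appear in the stated bound. It vanishes precisely when $\mathcal{R}(\tilde{A}^{\mathrm T})\subseteq\mathcal{R}(V_1)$, which is the natural working hypothesis here — the randomized subspace, of dimension $r\ge k+1$, should contain the $(k+1)$‑dimensional dominant subspace defining $\tilde{A}$ — and without it one would need a $\sin\theta$‑estimate between the two dominant subspaces, of size $\mathcal{O}(\sigma_{k+1}(A)/\mathrm{gap})$, which the clean bound tacitly absorbs. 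The remainder is bookkeeping: verifying $D>0$ (the TLS genericity condition $\sigma_r(A_r)>\sigma_{r+1}([A_r,b])$, already needed for $\hat{x}$ and $\tilde{x}$ to be well defined), tracking the constants $c_1=1+\sqrt{1+k\epsilon^2}$ and $c_2=2c_1+1$, and recording that all estimates hold on the event of probability at least $1-\Delta$ supplied by Lemma~\ref{lem_Qgu}.
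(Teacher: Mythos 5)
Your residual bound is essentially the paper's argument (same splitting $\hat r=(b-A_r\hat x)-(A-A_r)\hat x$, same TLS residual identity $\|b_1-A_{11}y\|=\sigma_{r+1}([A_r,b])\sqrt{1+\|y\|^2}$ with $\|y\|=\|\hat x\|$), so that half is fine. The solution bound, however, has a genuine gap, and you have correctly located it yourself: applying Wedin's identity \eqref{pseudoBA} at the level of the matrices $A_r$ and $\tilde A$ leaves the third term $(I-A_r^\dag A_r)T^{\mathrm T}(\tilde A\tilde A^{\mathrm T})^\dag b=-(I-V_1V_1^{\mathrm T})\tilde x$, which measures the component of the truncated-TLS solution outside the randomized row space. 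Nothing in the hypotheses forces $\mathcal{R}(\tilde A^{\mathrm T})\subseteq\mathcal{R}(V_1)$; Lemma \ref{lem_Qgu} only controls $\|A-QQ^{\mathrm T}A\|$, not the containment of the exact dominant right singular subspace of $[A,\,b]$ in $\mathcal{R}(V_1)$. So your argument, as written, proves the claimed bound plus an extra additive term $\|(I-V_1V_1^{\mathrm T})\tilde x\|/\|\tilde x\|$ that you would still need a $\sin\theta$-type estimate to absorb; declaring the containment a ``natural working hypothesis'' is an additional assumption, not part of the theorem.

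The paper sidesteps this entirely by a different placement of Wedin's identity: it writes $\hat x-\tilde x=[A_r^{\mathrm T}A_r-\sigma_{r+1}^2([A_r,b])I_n]^{-1}(A_r-\tilde A)^{\mathrm T}b+\bigl([A_r^{\mathrm T}A_r-\sigma_{r+1}^2 I_n]^{-1}-(\tilde A^{\mathrm T}\tilde A)^\dag\bigr)\tilde A^{\mathrm T}b$ and applies \eqref{pseudoBA} to the pair $B=A_r^{\mathrm T}A_r-\sigma_{r+1}^2([A_r,b])I_n$, $A=\tilde A^{\mathrm T}\tilde A$, acting on the vector $\tilde A^{\mathrm T}b$. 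Because $B$ is nonsingular, $I-B^\dag B=0$ and the troublesome third Wedin term vanishes identically, while the second term contributes $(I-AA^\dag)\tilde A^{\mathrm T}b=\tilde A^{\mathrm T}(b-\tilde A\tilde x)=\tilde A^{\mathrm T}\tilde r$, which is exactly where the $\|A\|\|\tilde r\|/D$ factor in the stated bound comes from ($D=\sigma_r^2(A_r)-\sigma_{r+1}^2([A_r,b])=\|B^{-1}\|^{-1}$). The perturbation is then $T=\tilde A^{\mathrm T}\tilde A-A_r^{\mathrm T}A_r+\sigma_{r+1}^2 I_n$ with $\|T\|\le\|A_r-\tilde A\|(\|A_r\|+\|\tilde A\|)+\sigma_{r+1}^2([A_r,b])\le c_2\|A\|\sigma_{k+1}(A)$, and the constants fall out as claimed. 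If you want to salvage your route, either add and justify the subspace-containment (or angle) estimate, or switch to the normal-equations form of Wedin's identity as above.
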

\begin{proof}
According to the randomized core reduction (Algorithm \ref{algo_tls2}), the approximate solution is given by
\begin{equation*}
  \hat{x} =  \left[A_r^{\mathrm T} A_r - \sigma_{r+1}^2([A_r, ~b]) I_n \right]^{-1} A_r^{\mathrm T} b,
\end{equation*}
and the truncated TLS solution $\tilde{x} = \tilde{A}^\dag b$.
Then by Lemma \ref{lem_pseudoinverse},
\begin{equation*}
\begin{split}
  \hat{x}-\tilde{x} &= \left[A_r^{\mathrm T} A_r - \sigma_{r+1}^2([A_r, ~b]) I_n  \right]^{-1} A_r^{\mathrm T} b - (\tilde{A}^{\rm T}\tilde{A})^\dag \tilde{A}^{\rm T} b \\
  &= \left[A_r^{\mathrm T} A_r - \sigma_{r+1}^2([A_r, ~b]) I_n  \right]^{-1} (A_r-\tilde{A})^{\mathrm T} b +
  \left[(A_r^{\mathrm T} A_r - \sigma_{r+1}^2([A_r, ~b]) I_n  )^{-1}- (\tilde{A}^{\mathrm T} \tilde{A})^\dag\right] \tilde{A}^{\mathrm T} b \\
  &\overset{\eqref{pseudoBA}}
  {=} \left[A_r^{\mathrm T} A_r - \sigma_{r+1}^2([A_r, ~b]) I_n  \right]^{-1} (A_r-\tilde{A})^{\mathrm T} b \\
  & \quad +\left[A_r^{\mathrm T} A_r - \sigma_{r+1}^2([A_r, ~b]) I_n  \right]^{-1}T\tilde{x}-\left[A_r^{\mathrm T} A_r - \sigma_{r+1}^2([A_r, ~b]) I_n  \right]^{-2}T\tilde{A}^{\rm T}\tilde{r},
  \end{split}
\end{equation*}
with $T = \tilde{A}^{\mathrm T} \tilde{A}-A_r^{\mathrm T} A_r + \sigma_{r+1}^2([A_r, ~b]) I_n$ and the residual for truncated TLS problem $\tilde{r} = b- \tilde{A}\tilde{x}$.

By $\|\tilde{A}\|\leq \|A\|$, $\|A_r\|\leq \|A\|$, $\sigma_{r+1}([A_r,~b])\leq \sigma_{k+1}(A)$, and
\begin{equation*}
\begin{split}
  \|A-\tilde{A}\|&\leq \|[A, b]-[\tilde{A}, \tilde{b}]\| =\sigma_{k+2}([A,~b])\leq \sigma_{k+1}(A),\\
\|A_r-\tilde{A}\|&\leq \|A-A_r\| + \|A-\tilde{A}\|\leq c_1\sigma_{k+1}(A),\\
  \|T\|&\leq \|A_r-\tilde{A}\|(\|\tilde{A}\|+\|A_r\|) + \sigma_{r+1}^2([A_,~b])\\
 &\leq [2c_1\|A\| + \sigma_{k+1}(A)]\sigma_{k+1}(A)\leq c_2\|A\|\sigma_{k+1}(A).
\end{split}
\end{equation*}
we have
\begin{equation*}
  \begin{split}
     \frac{\|\hat{x}-\tilde{x}\|}{\|\tilde{x}\|} &\leq  \frac{\sigma_{k+1}(A)}{\sigma_{r}^2(A_r)-\sigma_{r+1}^2([A_r, b])}
     \left[c_1\frac{\|b\|}{\|\tilde{x}\|} + [2c_1\|A\|+\sigma_{k+1}(A)]\left(1+\frac{\|A\|\|\tilde{r}\|/\|\tilde{x}\|}{\sigma_{r}^2(A_r)-\sigma_{r+1}^2([A_r, b])}\right)\right]\\
     & \leq \frac{\sigma_{k+1}(A)}{\sigma_{r}^2(A_r)-\sigma_{r+1}^2([A_r, b])}
     \left[c_1\frac{\|b\|}{\|\tilde{x}\|} + c_2\|A\|\left(1+\frac{\|A\|\|\tilde{r}\|/\|\tilde{x}\|}{\sigma_{r}^2(A_r)-\sigma_{r+1}^2([A_r, b])}\right)\right].
  \end{split}
\end{equation*}

Consider the residual $\hat{r} = b-A\hat{x}$  of the randomized core reduction, we obtain
\begin{equation*}
\begin{split}
  \hat{r} &= b-A\hat{x}= b- A \left[A_r^{\mathrm T} A_r - \sigma_{r+1}^2([A_r, ~b])I_n \right]^{-1} A_r^{\mathrm T} b\\
  & = b-A_r \left[A_r^{\mathrm T} A_r - \sigma_{r+1}^2([A_r, ~b])I_n\right]^{-1} A_r^{\mathrm T} b
       + (A_r-A) \left[A_r^{\mathrm T} A_r - \sigma_{r+1}^2([A_r, ~b]) I_n\right]^{-1} A_r^{\mathrm T} b,
\end{split}
\end{equation*}
then
\begin{equation*}
\begin{split}
  \|\hat{r}\| &\leq \left\|\begin{bmatrix}
  U_1^{\mathrm T} b - \Sigma_1(\Sigma_1^{\mathrm T} \Sigma_1-\sigma_{r+1}^2([A_r, ~b])I_r)^{-1} \Sigma_1^{\mathrm T} U_1^{\mathrm T} b \\
  U_2^{\mathrm T} b
  \end{bmatrix}\right\| + \|A_r-A\|\|\hat{x}\|\\
  & =\left\|\begin{bmatrix}
  U_1^{\mathrm T} b - \Sigma_1y \\
  U_2^{\mathrm T} b
  \end{bmatrix}\right\| + \|A_r-A\|\|\hat{x}\|
  =\left\|  b_1 - A_{11}y \right\| + \|A_r-A\|\|\hat{x}\|\\
  &= \sigma_{r+1}([A_r, ~b])\sqrt{1+\|y\|^2}+ \|A_r-A\|\|\hat{x}\|\\
    &\leq  \sigma_{r+1}([A_r, ~b])\sqrt{1+\|\hat{x}\|^2}+ \sqrt{1+k\epsilon^2}\sigma_{k+1}(A)\|\hat{x}\|
    \leq c_1\sigma_{k+1}(A)\sqrt{1+\|\hat{x}\|^2}.
\end{split}
\end{equation*}
\end{proof}

\begin{rem}
  While the bound  for the error in Theorem \ref{thm_core_ttls} is pessimistic, it gives an indication of the influence of $\sigma_{k+1}(A)$. The bound for the residual seems reasonalbe, because it coincides with the minimization of $\|r\|/\sqrt{1+\|x\|^2}$ in the TLS problem and this term $\|\hat{r}\|/\sqrt{1+\|\hat{x}\|^2}$ is relatively small for the ill-posed problems.
\end{rem}

\section{Numerical Experiments}\label{sec_numerical}
In this section, we give several examples to illustrate that the randomized algorithms are as accurate as the classical methods.
The computations are carried out in \textsc{MATLAB}
R2015b 64-bit (with an Intel Core i5 6200U CPU  @2.30GHz 2.40GHz  processor and 8 GB RAM).
The comparison results are computed by the partial SVD \cite{larsen1998lanczos} with package PROPACK \cite{larsen2004propack}.

For a better understanding of the tables below, we list here the notation:
\begin{itemize}
  \item $x_*$ is the exact solution of the TLS problem \eqref{TLS} or \eqref{TLS_close};
  \item  ${\rm Err} = \|\hat{x}-x_*\|/\|x_*\|$ (in Algorithm \ref{algo_tls2}) is the relative error;
  \item ${\rm Time}$ is the execution time (in seconds) of the randomized core reduction in Algorithm \ref{algo_tls2};
  \item ${\rm Rank}$ stands for the number of samples, i.e., the rank of the small-scale TLS problem, which is selected by the adaptive randomized range finder in Algorithm~\ref{algo_tls1};
  \item ${\rm Err\_p}$ and ${\rm Time\_p}$ are respectively the relative errors and execution time computed by PROPACK \cite{larsen2004propack} to the $r$-th singular values with ``$r= $ Rank'';
  \item The tolerance $\varepsilon = 10^{-3}$ in Algorithm~\ref{algo_tls1} for Example~\ref{ex1d}-- \ref{ex2d}.
\end{itemize}

\begin{exam}\label{ex1d}
The collection of examples are from Hansen's Regularization Tools \cite{HansenTool}.
All the problems are derived from discretizations of Fredholm integral equations of the first kind with a square integrable kernel
\begin{equation*}
  \int_{a}^{b} K(s,t)f(t)dt = g(s), \quad c\leq s\leq d.
\end{equation*}
The right-hand side $g$ and the kernel $K$ are given, and $f$ is the unknown solution, which are extremely sensitive to high-frequency perturbations.
Two different discretization techniques are used --- the quadrature method and the Galerkin method with orthonormal basis functions.
We choose the examples \textsf{i\_laplace, shaw, heat, foxgood, phillips, gravity} as in Table \ref{hansentest}.
The decaying trends for the singular values corresponding are shown in Figure \ref{fig_decay}.
\begin{table}[ht]
  \centering
  \begin{tabular}{ll}
    \hline
\textsf{i\_laplace}  &  Inverse Laplace transformation \\
\textsf{shaw}   &  One-dimensional image restoration model \\
\textsf{heat}   & Inverse heat equation \\
\textsf{foxgood}& Severely ill-posed test problem \\
\textsf{phillips}& Phillips' ``famous'' test problem \\
\textsf{gravity} & One-dimensional gravity surveying problem  \\
    \hline
  \end{tabular}
  \caption{Test Problems in Hansen's Regularization Tools}\label{hansentest}
\end{table}

\begin{figure}[ht]
  \centering
  \includegraphics[width=10cm,height=6cm]{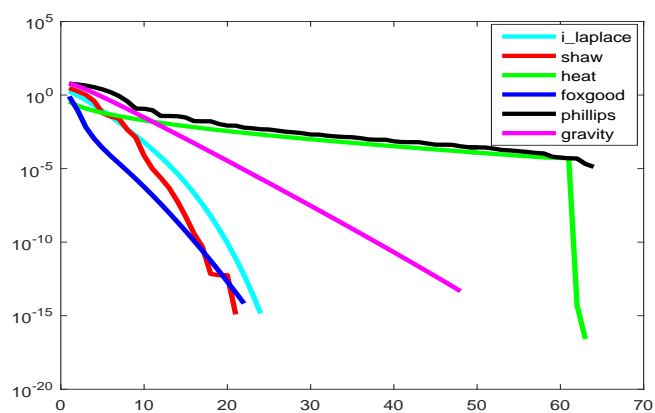}
  \caption{The decaying trends of singular values of the test examples.}\label{fig_decay}
\end{figure}
\end{exam}

The numerical results from Algorithm~\ref{algo_tls2} are shown in Tables~\ref{tab_Algo_IIe1} and \ref{tab_Algo_IIe2} and Figure \ref{fig_accu2}.
With the increasing size of the problem (compare Tables~\ref{tab_Algo_IIe1} and \ref{tab_Algo_IIe2}), the randomized TLS algorithm shows more advantages over the classical ones and the partial SVD.
For problems of the same size (Tables~\ref{tab_Algo_IIe1} or \ref{tab_Algo_IIe2}), the randomized TLS algorithm can save about 90$\%$ ``Time'' and still achieve similar errors.
It is important to remember that $y$ is the solution of the small-scale TLS problem generated by  Algorithm~\ref{algo_tls2} or PROPACK \cite{larsen2004propack}. Notice that execution
times from MATLAB may be affected by many factors, so the associated information below should be used as a rough guide only.
For Algorithm~\ref{algo_tls1} with fixed-precision, the computed rank (``Rank'' in Table~\ref{tab_Algo_IIe1}) reflects the decaying trends of singular values (in Figure~\ref{fig_decay}).
If the singular values are unknown, the randomized algorithm  works well with the adaptive range finder \cite{halko2011finding}.

\begin{table}[!ht]
\footnotesize
\centering
\begin{tabular}{c|ccccc}
$1024$&$\rm{Err} $&$\rm{Time} $& ${\rm Rank}$&\rm{Err\_p} &$\rm{Time\_p} $\\
\hline
i\_laplace &8.748E-04 & 1.12E-01 &  18 & 8.724E-04 & 1.06 \\
shaw	&1.860E-02 & 5.50E-02 &  11 & 1.860E-02 & 0.66 \\
heat	&5.688E-03 & 1.39E-01 &  66 & 5.718E-03 & 1.05 \\
foxgood	&7.717E-03 & 5.23E-02 &  10 & 7.545E-03 & 0.68 \\
phillips	&1.745E-02 & 3.12E-01 & 136 & 1.826E-02 & 2.25 \\
gravity	&6.406E-04 & 7.75E-02 &  20 & 6.388E-04 & 0.71 \\
\end{tabular}
\caption{Data with randomized SVD of $A$ for $n=1024$ and $\varepsilon=10^{-3}$.}
\label{tab_Algo_IIe1}
\end{table}

\begin{table}[!ht]
\footnotesize
\centering
\begin{tabular}{c|ccccc}
$4096$&$\rm{Err} $&$\rm{Time} $& ${\rm Rank}$&\rm{Err\_p} &$\rm{Time\_p} $\\
\hline
i\_laplace&1.439E-04 & 1.07 &  18 & 1.439E-04 & 32.74 \\
shaw	&1.853E-02 & 0.686 &  11 & 1.853E-02 & 32.38 \\
heat	&6.246E-03 & 2.66 &  62 & 6.715E-03 & 34.96 \\
foxgood&3.603E-03 & 0.670 &  11 & 3.419E-03 & 32.32 \\
phillips	&8.770E-03 & 2.88 & 133 & 7.094E-03 & 40.07 \\
gravity	&3.817E-04 & 0.961 &  19 & 3.813E-04 & 32.90 \\
\end{tabular}
\caption{Data with randomized SVD of $A$ for $n=4096$ and $\varepsilon=10^{-3}$.}
\label{tab_Algo_IIe2}
\end{table}

\begin{figure}[!ht]\centering
\subfigure[][\textsf{deriv2}]
    {\includegraphics[scale=0.3]{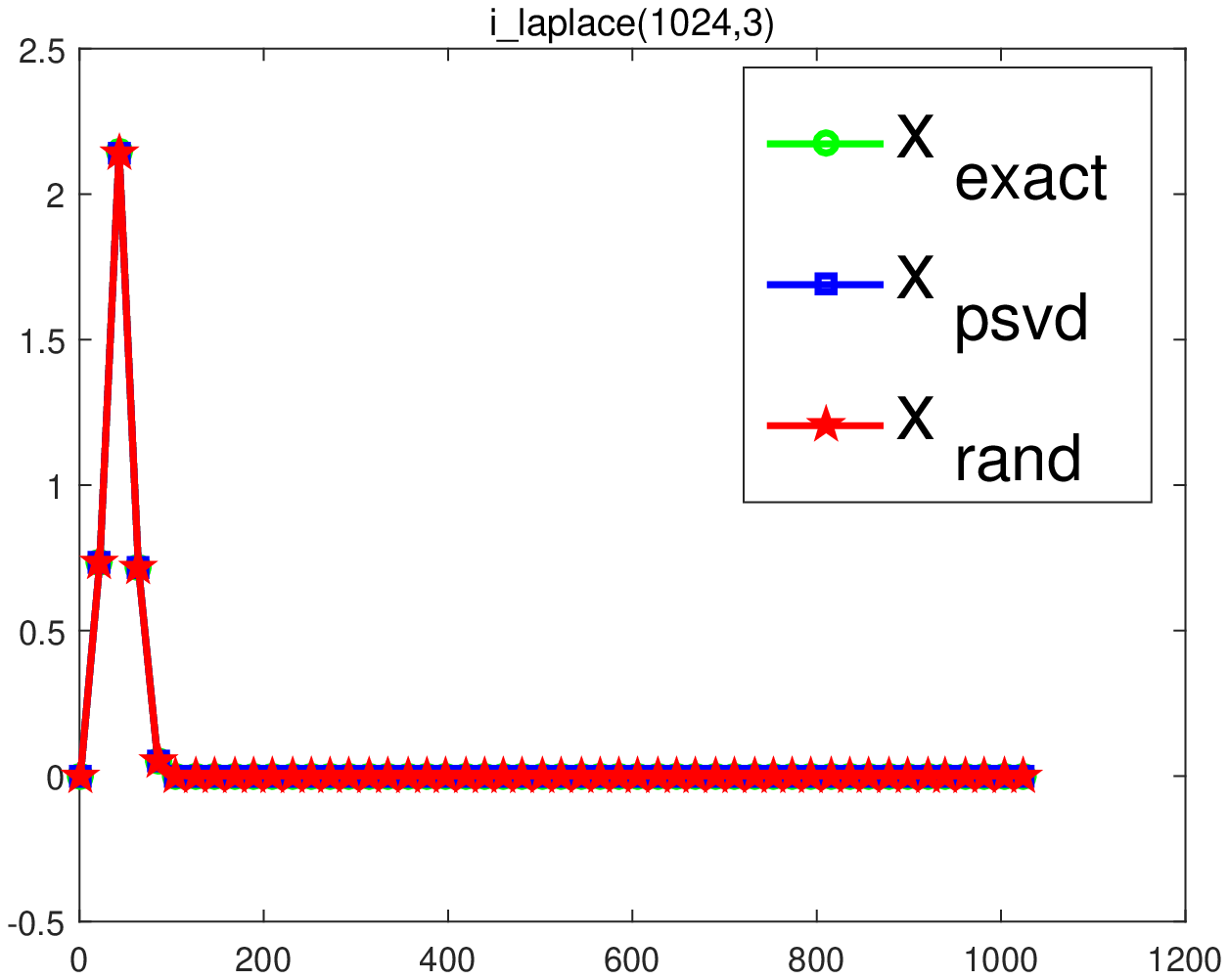}}
\subfigure[][\textsf{shaw}]
    {\includegraphics[scale=0.3]{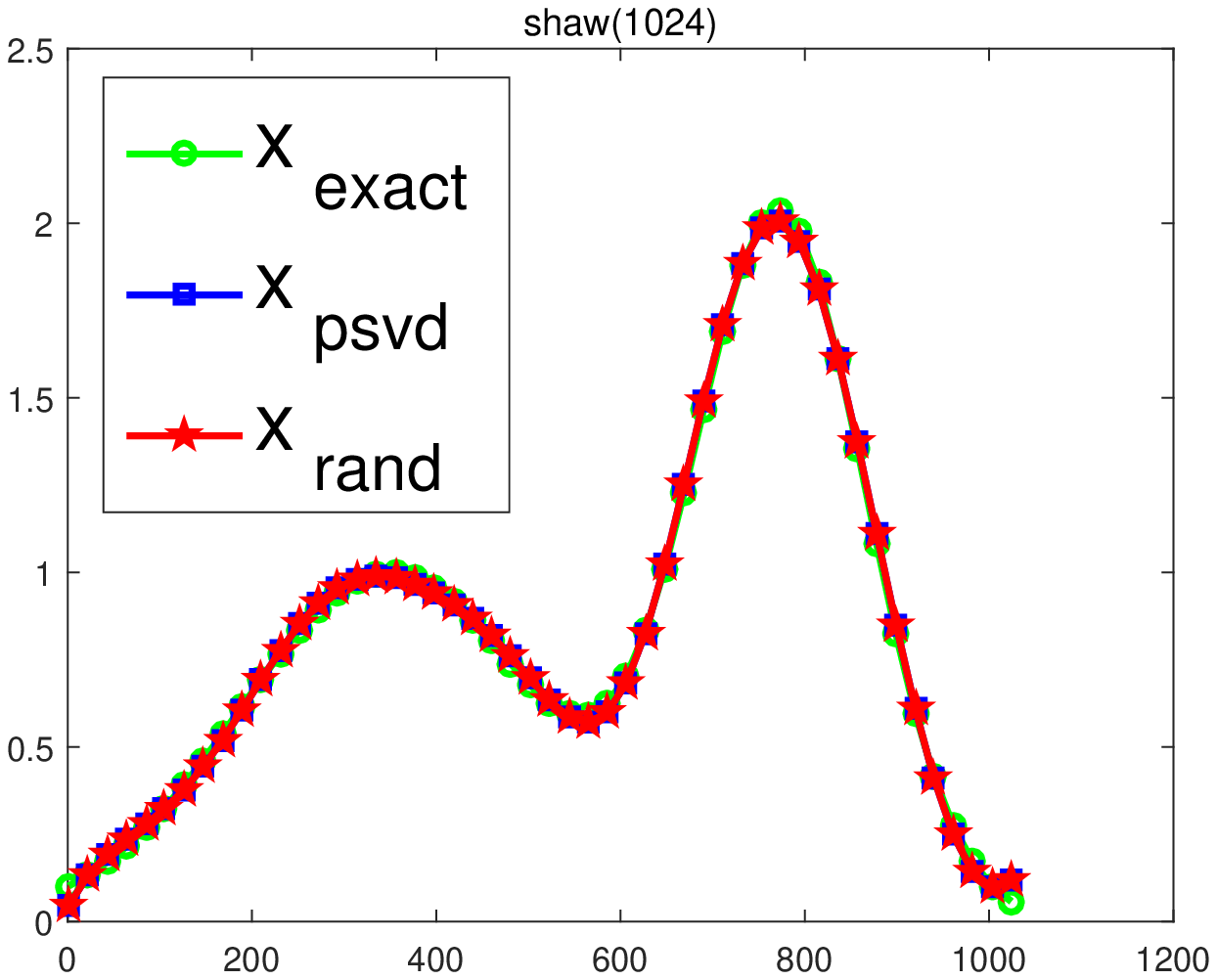}}
\subfigure[][\textsf{heat}]
    {\includegraphics[scale=0.3]{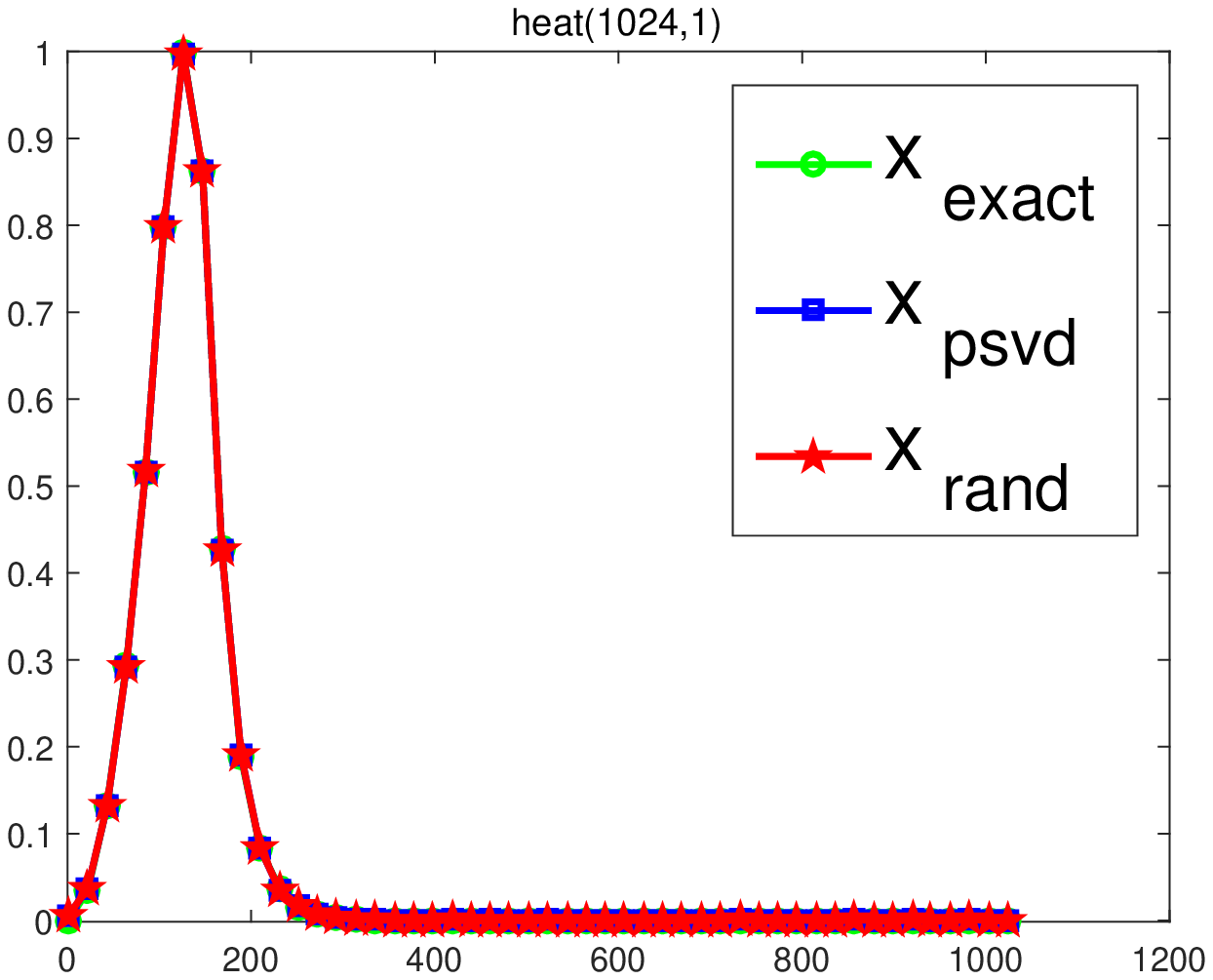}}
\subfigure[][\textsf{foxgood}]
    {\includegraphics[scale=0.3]{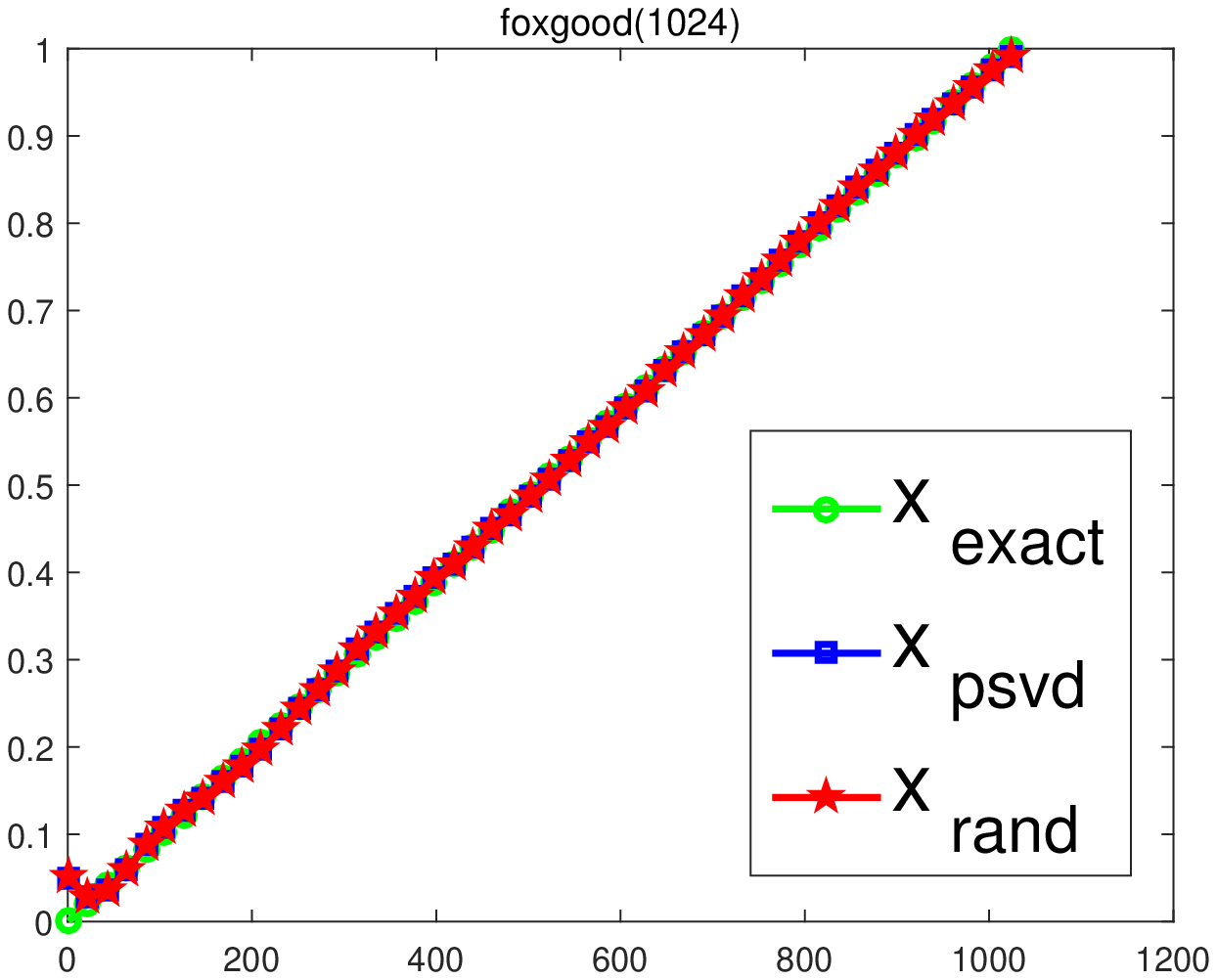}}
\subfigure[][\textsf{phillips}]
    {\includegraphics[scale=0.3]{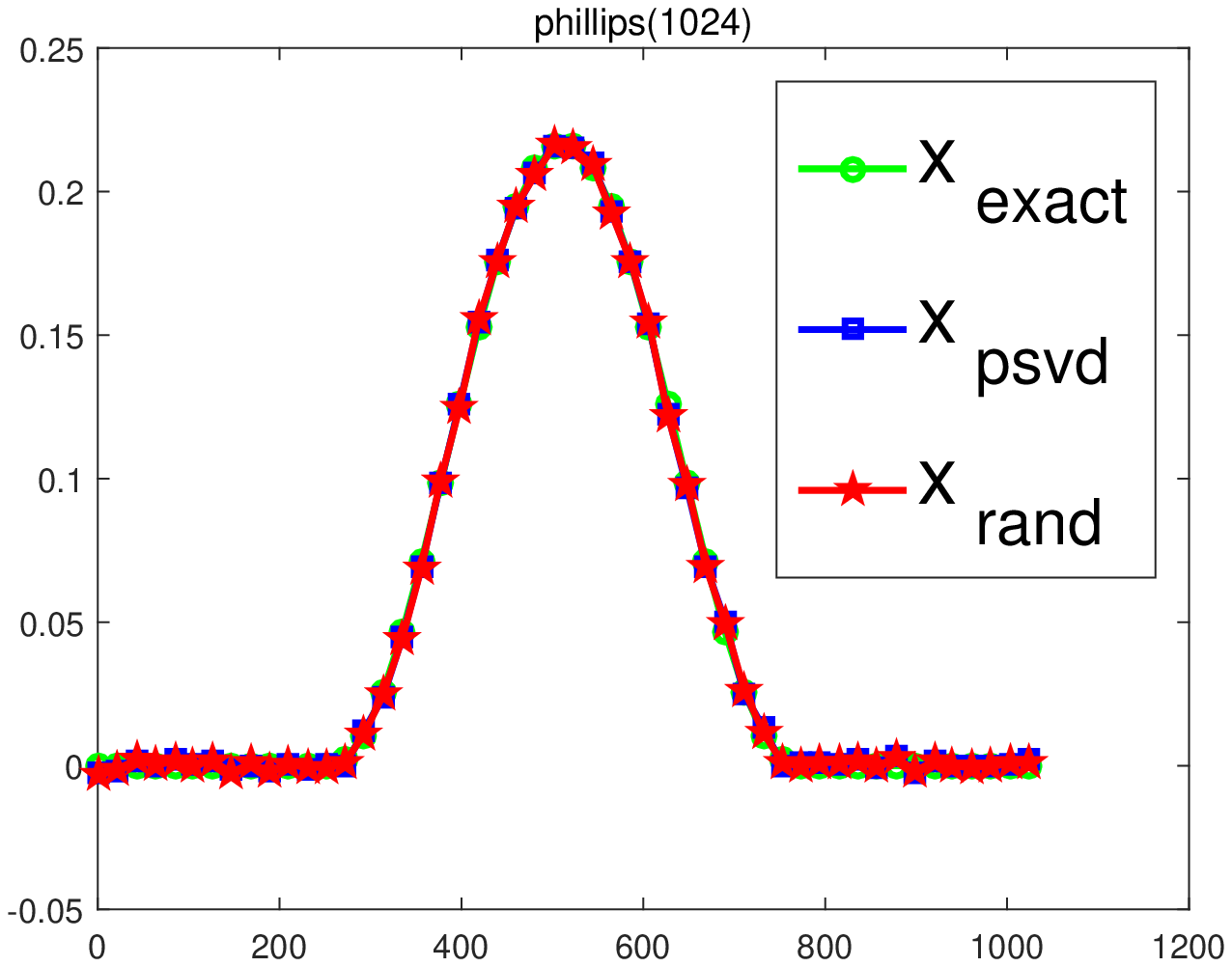}}
\subfigure[][\textsf{gravity}]
    {\includegraphics[scale=0.3]{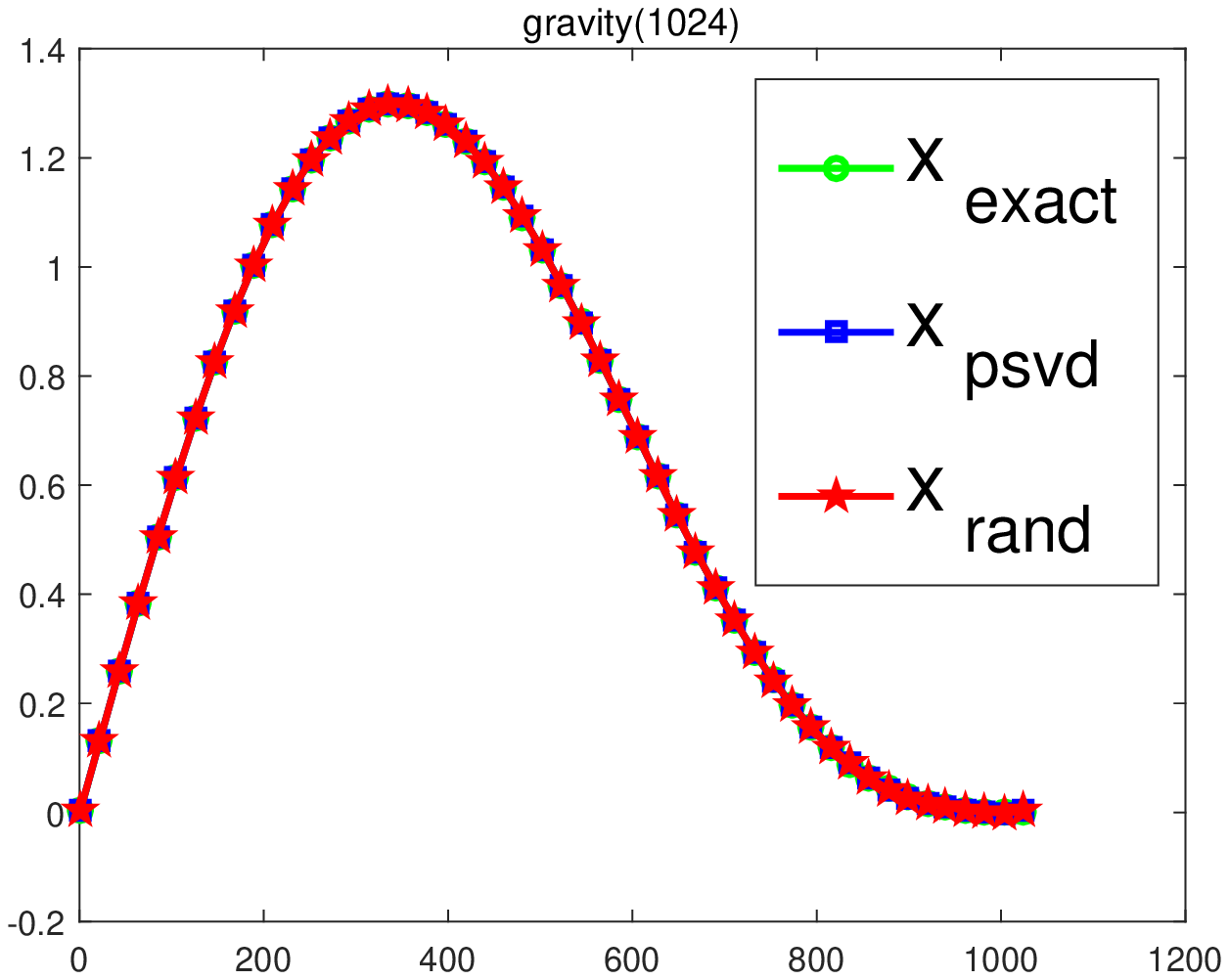}}
\caption{Solutions obtained by Algorithm \ref{algo_tls2} for $n=1024$.}\label{fig_accu2}
\end{figure}

\begin{exam}\label{ex2d}
A first kind Fredholm integral equation in two dimensions may take the form
\begin{equation*}
  \int_{a}^{b}\int_{c}^{d}K(x,y;s,t)f(s,t)dsdt = g(x,y),
\end{equation*}
where the kernel $K$ is a real convolution operator $K(x,y;s,t)=h(x-s,y-t)$.
 For example, discretization of a two dimensional model problem in gravity surveying, in which
a mass distribution $f(s,t)$ is located at depth $d$, while the vertical component of the
gravity field $g(x,y)$ is measured at the surface. The resulting problem has the kernel
\begin{equation*}
  K(x,y;s,t) = d\left[d^2+(x-s)^2 + (y-t)^2\right]^{-\frac32},
\end{equation*}
The constant $d$ controls the decay of the singular values (the larger the $d$, the faster
the decay). Let the right-hand side $g$ be given for the solution $f(s, t) = \sin(\pi t)\sin(\pi s)$.
From Algorithm \ref{algo_tls2} and the results are given in Table \ref{tab_Algo_IIe3} and Figure \ref{fig_gravity2d}.
\end{exam}

\begin{table}[!ht]
\footnotesize
\centering
\begin{tabular}{c|ccccc}
$n$&$\rm{Err} $&$\rm{Time} $& ${\rm Rank}$&\rm{Err\_p} &$\rm{Time\_p} $\\
\hline
64 & 1.310E-03 &3.95E-02 &  64 & 1.310E-03 & 0.27 \\
256 & 1.044E-01 &6.17E-02 & 209 & 1.108E-01 & 0.51 \\
1024 & 4.083E-02 &0.384 & 231 & 4.517E-02 & 8.71 \\
4096 & 1.645E-02 &3.86 & 247 & 1.620E-02 & 25.82 \\
\end{tabular}
\caption{Gravity in two dimensions with $\varepsilon=10^{-3}$.}
\label{tab_Algo_IIe3}
\end{table}
\begin{figure}
  \centering
  \includegraphics[width=12cm,height=5cm]{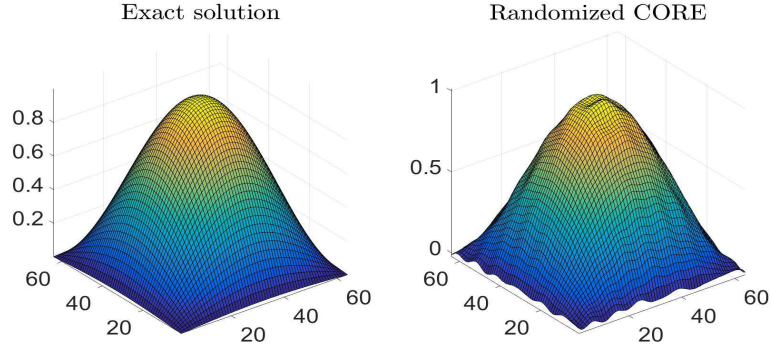}
  \caption{Two-dimensional gravity model for $n=4096$}\label{fig_gravity2d}
\end{figure}
Up to $n = 4096$, solutions can be obtained efficiently. We notice that ``Rank'' does not increase much, or information increases slowly with respect to $n$ and the randomized algorithm works well.

\begin{exam}\label{exblur}
  Consider the image blurring model
  \begin{equation*}
    A_c X A_r^{\rm T} = B,
  \end{equation*}
  where $A_r$ and $A_c$ are Toeplitz matrices \cite{hansen2006deblurring}, and blind deconvolution involves the TLS problem with  $A= A_r\otimes A_c$,  $x = {\rm vec}(X)$ and $b = {\rm vec}(B)$. We test a simple image for Algorithm \ref{algo_tls2} and the results are given in Table \ref{tab_Algo_IIe4} and Figure \ref{fig_image}.
\end{exam}

\begin{table}[!ht]
\footnotesize
\centering
\begin{tabular}{c|ccccc}
$n$&$\rm{Err} $&$\rm{Time} $& ${\rm Rank}$&\rm{Err\_p} &$\rm{Time\_p} $\\
\hline
256 & 3.880E-01 &5.63E-02 &  26 & 3.544E-01 & 0.27 \\
1024 & 2.560E-01 &0.135 & 131 & 2.566E-01 & 0.94 \\
4096 & 1.625E-01 &7.73 & 854 & 1.627E-01 & 97.94 \\
\end{tabular}
\caption{Image restoration $A$ with $\varepsilon=0.1$.}
\label{tab_Algo_IIe4}
\end{table}
\begin{figure}
  \centering
  \includegraphics[width=12cm]{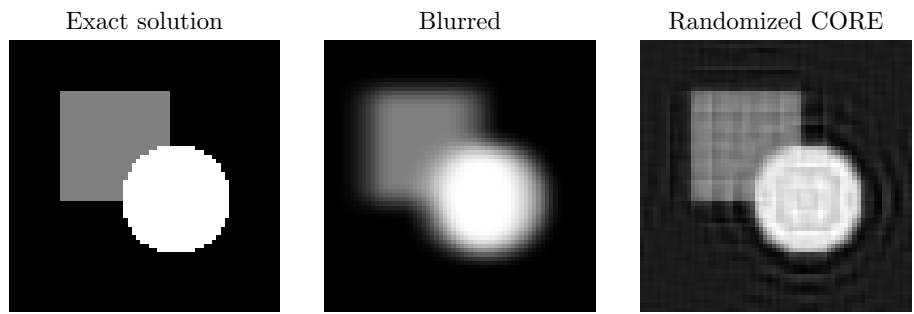}
  \caption{Image restoration for $n=4096$ and $\varepsilon=0.1$.}\label{fig_image}
\end{figure}

The matrix $A$ does not need to be constructed explicitly by Kronecker product
based on
${\rm vec}(AXB) = (B^{\rm T} \otimes A) {\rm vec}(X)$.
Furthermore for Toeplitz matrices, matrix-vector products can be accelerated by the Fast Fourier Transformation (FFT) \cite[Section 1.4.1]{Golub2013matrix}.
In this example we choose a larger tolerance $\varepsilon = 0.1$ and the results for different $n$ are shown in Table \ref{tab_Algo_IIe4}. Figure \ref{fig_image} gives the original image $X$, blurred image $B$ and the restored image by randomized core reduction, respectively. We can see that the restored image retrieves the main property of the original one. In fact, if we use $\varepsilon = 10^{-3}$, then 95.5s is needed to get a similar error ``Err = 1.210E$-$01'' with larger ``Rank = 3535'' by Algorithm~\ref{algo_tls2} for the case $n=4096$. More time and storage is occupied but the approximation is not much better than that in Table \ref{tab_Algo_IIe4}. So the large parameter $\varepsilon =0.1$ is more suitable.
  The randomized core reduction algorithm may not work well for this problem for the high ``Rank'' and
  it may be more efficient if we consider the the block Toeplitz with Toeplitz block (BTTB) structure of matrix $A$
  or utilize other randomized matrices other than Gaussian.
  The regularization of the structured TLS problem has been considered \cite{pruessner2003blind} and the corresponding structured randomized algorithm will be considered in future.

\begin{exam}\label{exIR}
  We test three severely ill-posed examples \textsf{PRdiffusion(n)}, \textsf{PRnmr(n)} and \textsf{PRblurgauss} from the IR Tools \cite{gazzolaper2018IR}.

{\rm(1)} \textsf{PRdiffusion(n)} is a 2D diffusion problem in the domain $[0, T]\times[0, 1]\times[0, 1]$:  \[\frac{\partial u}{\partial t} = \nabla^2 u\]
with homogeneous Neumann boundary conditions and a smooth function $u_0$ as initial condition at time
$t = 0$. It is generated by the statement:
\begin{equation*}
 \rm  [A, b, x, ProbInfo] = PRdiffusion(n);
\end{equation*}
where the function handle $A$ represents the PDE, the true solution $x$ and the right-hand side $b$ consist of the $N=n^2$ values of $u_0$ and $u_T$, respectively.

{\rm(2)} \textsf{PRnmr(n)} is the 2D Nuclear Magnetic Resonance (NMR) relaxometry and mathematically modeled using the following Fredholm integral equation of the first kind
        \[\int_0^{\widehat{T}^1}\int_0^{\widehat{T}^2}\kappa(\tau^1,\tau^2, T^1,T^2)f(T^1,T^2)dT^1dT^2= g(\tau^1,\tau^2),\]
where $g(\tau^1,\tau^2)$ is the noiseless signal as a function of experiment times $(\tau^1,\tau^2)$, and $f(T^1,T^2)$ is the density distribution function. The kernel  is separable:
\[\kappa(\tau^1,\tau^2, T^1,T^2) = (1-2{\rm exp}(-\tau^1/T^1)){\rm exp}(-\tau^2/T^2),\]
and, upon variable transformation, regarded as a Laplace kernel. The function is generated by:
\[\rm  [A, b, x, ProbInfo] = PRnmr(n);\]
and the function handle $A$ has a Kronecker structure.

{\rm(3)} \textsf{PRblurgauss} simulates a spatially invariant Gaussian blur, and we choose
one of the synthetically generated images that is made up of randomly placed small ``dots'', with random intensities.
This test image may be used to simulate stars being imaged from ground based telescopes. To generate the test problem, we use
\begin{equation*}
  \begin{split}
       & \rm PRoptions.trueImage = `dotk'; \\
       & \rm [A, b, x, ProbInfo] = PRblurgauss(n,PRoptions);
  \end{split}
\end{equation*}
where $x$ and $b$ are the true image and the noisy blurred image of size $n\times n$, respectively.
\end{exam}

We apply  Algorithm \ref{algo_tls2} to the three examples with size $n = 64$, then the corresponding linear system in \eqref{eq:Axb} is of the size $n^2\times n^2 = 4096\times4096$. The package PROPACK gives similar precision and costs more ``Time'', so we just list the results computed by the randomized core reduction, in Figures \ref{fig_ir1},  \ref{fig_ir2} and \ref{fig_ir3}. The tolerance $\varepsilon$  from Algorithm \ref{algo_tls1} controls the precision of the approximate solution.  From the figures we observe that with smaller tolerance $\varepsilon$,  we can approximate the solution better. Since there is some noise in the ill-posed problems, the tolerance cannot be set too small so as to avoid unstable approximate solutions. The parameter $\varepsilon$ works as the regularization parameter which is difficult to choose for different kinds of problems. For the 2D linear inverse problems, it is more difficult to obtain accurate approximate solutions than the 1D cases, and more ``Time'' is required.
The details are shown in Table \ref{tab_ir}. But it is acceptable in real applications and competitive by comparison with others' results \cite{bai2017modulus}.  In the example \textsf{PRdiffusion}, the matrix-vector products with $A$ and $A^{\rm T}$ consume more ``Time'' than the others.

For the large-scale problems from IR Tools, the matrix  $A$ is either represented sparsity, or is given in a form (i.e., a user-defined object or a function handle) for which matrix-vector products can be performed efficiently. This is consistent with our randomized core reduction, where no explicit $A$ is required.

\begin{table}[!ht]
\footnotesize
\centering
\begin{tabular}{c|ccccc}
~ & $\varepsilon$& $\rm{Err} $&$\rm{Time} $& ${\rm Rank}$& $\|b-A\hat{x}\|$ \\
\hline
\multirow{2}{*}{PRdiffusion}
  & 0.1  & 1.7955E-01   &  348.37&  61 &  7.0645E-04\\
  & 0.001 & 6.6136E-02  &  590.72&  106 &  1.3938E-06\\
\hline
\multirow{2}{*}{PRnmr}
   &0.1 &  9.2856E-01&  20.007&  86 & 5.5215E-03\\
   &1.0E-05 & 3.5569E-01& 112.13&  503  & 1.2728E-07\\
\hline
\multirow{2}{*}{PRblurgauss}
   &0.1  &  4.5548E-01&  24.747& 325 & 3.7541E-04\\
   &0.001  & 2.7952E-01& 42.484  & 535  & 1.6602E-06\\
\end{tabular}
\caption{Examples from IR Tools for $n=64$.}
\label{tab_ir}
\end{table}

\begin{figure}[!ht]\centering
\subfigure[][\textsf{Exact}]
    {\includegraphics[scale=0.15]{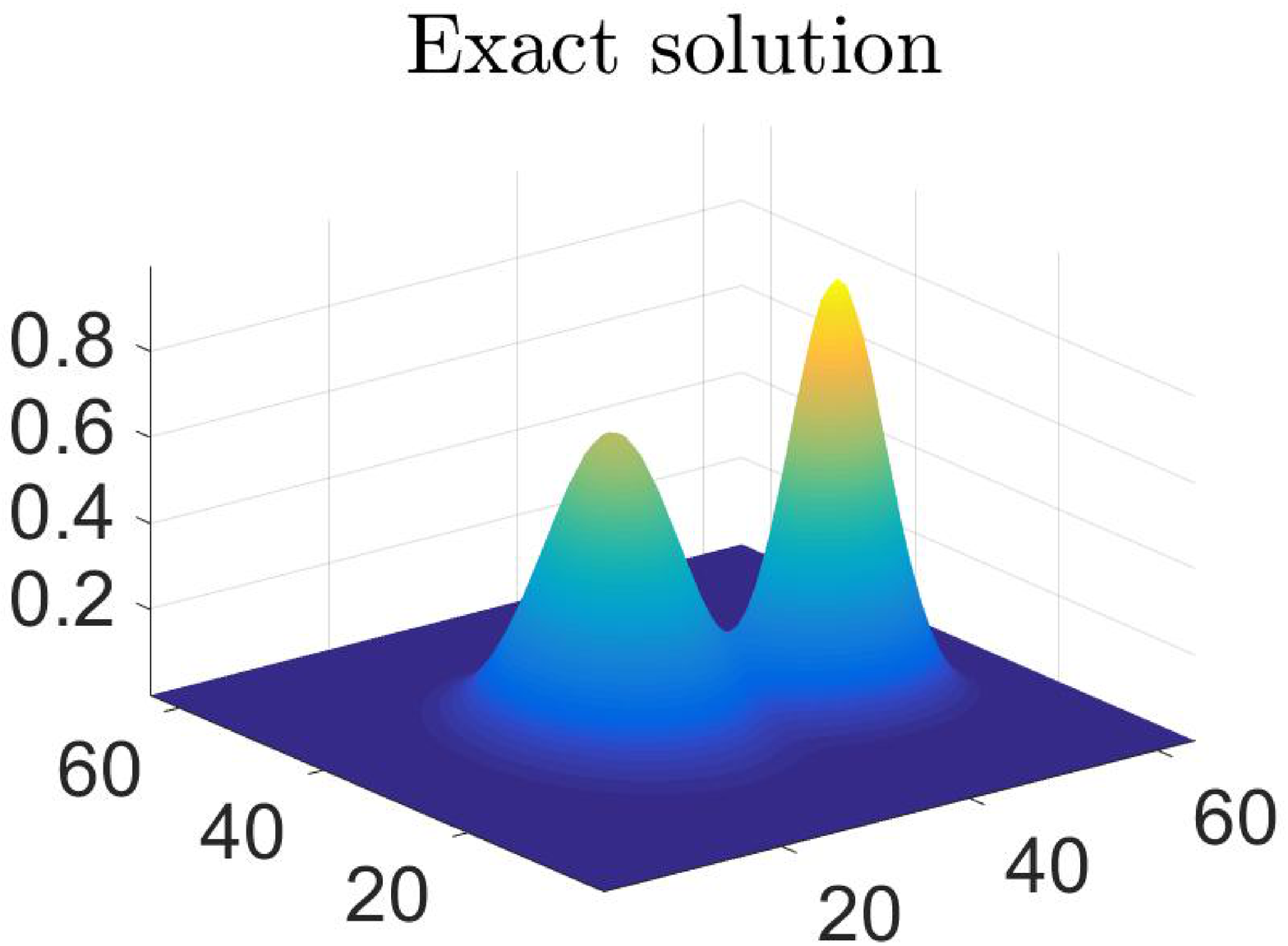}}
\subfigure[][$\varepsilon = 0.1$]
    {\includegraphics[scale=0.15]{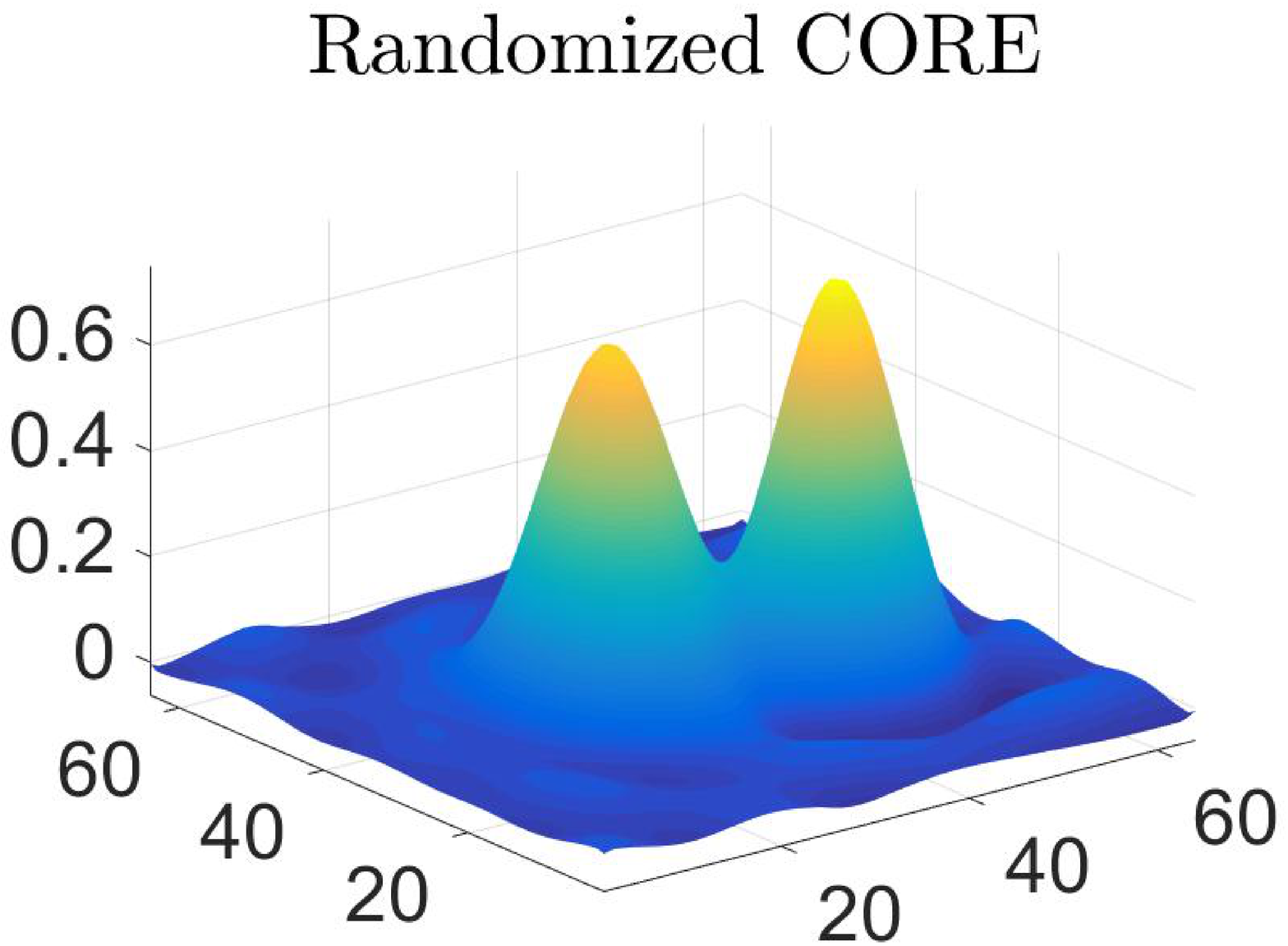}}
\subfigure[][$\varepsilon = 0.001$]
    {\includegraphics[scale=0.15]{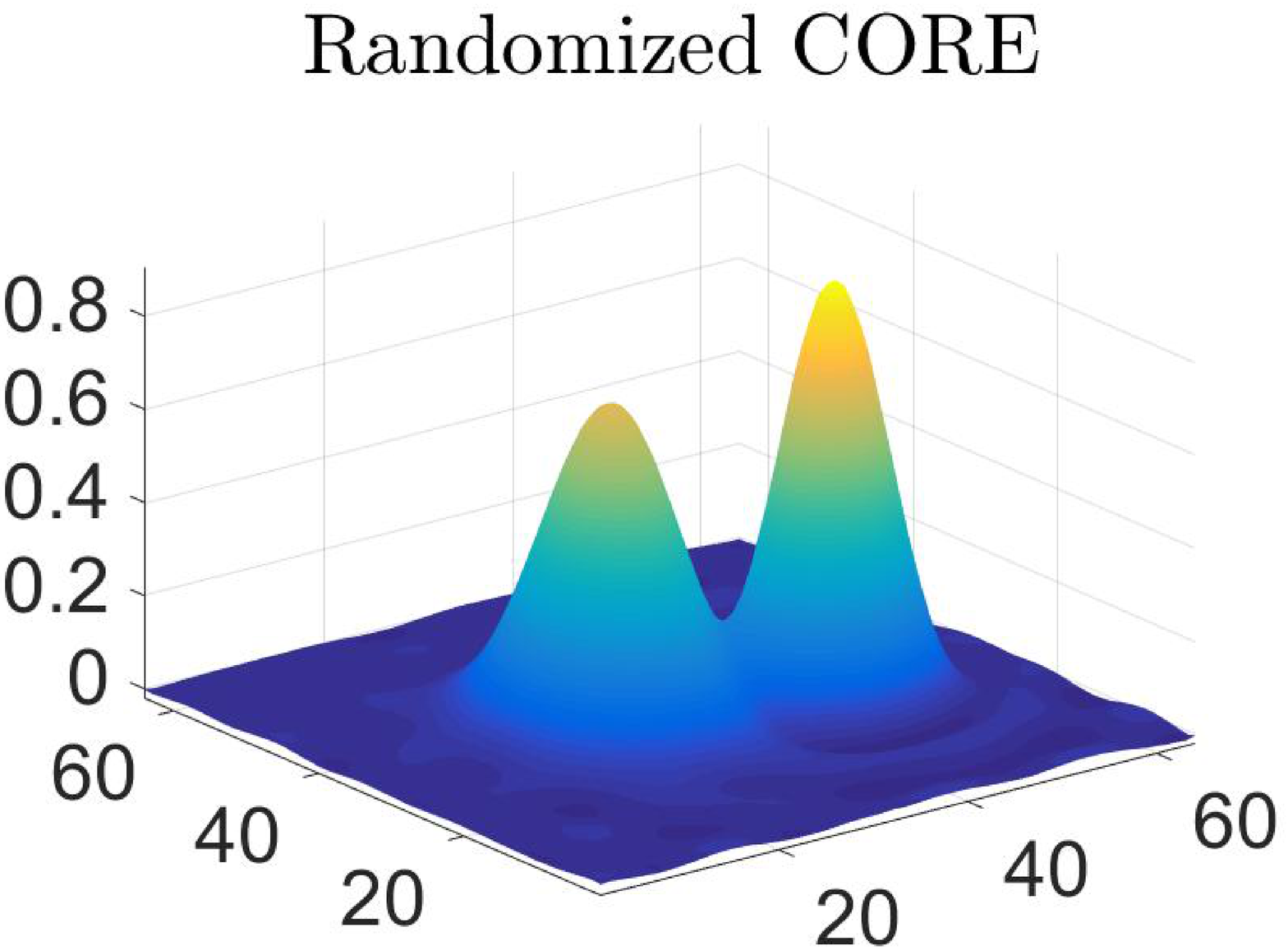}}
\caption{\textsf{PRdiffusion(n)}: Solutions obtained by Algorithm \ref{algo_tls2}  for $n=64$.}\label{fig_ir1}
\end{figure}

\begin{figure}[!ht]\centering
\subfigure[][\textsf{Exact}]
    {\includegraphics[scale=0.15]{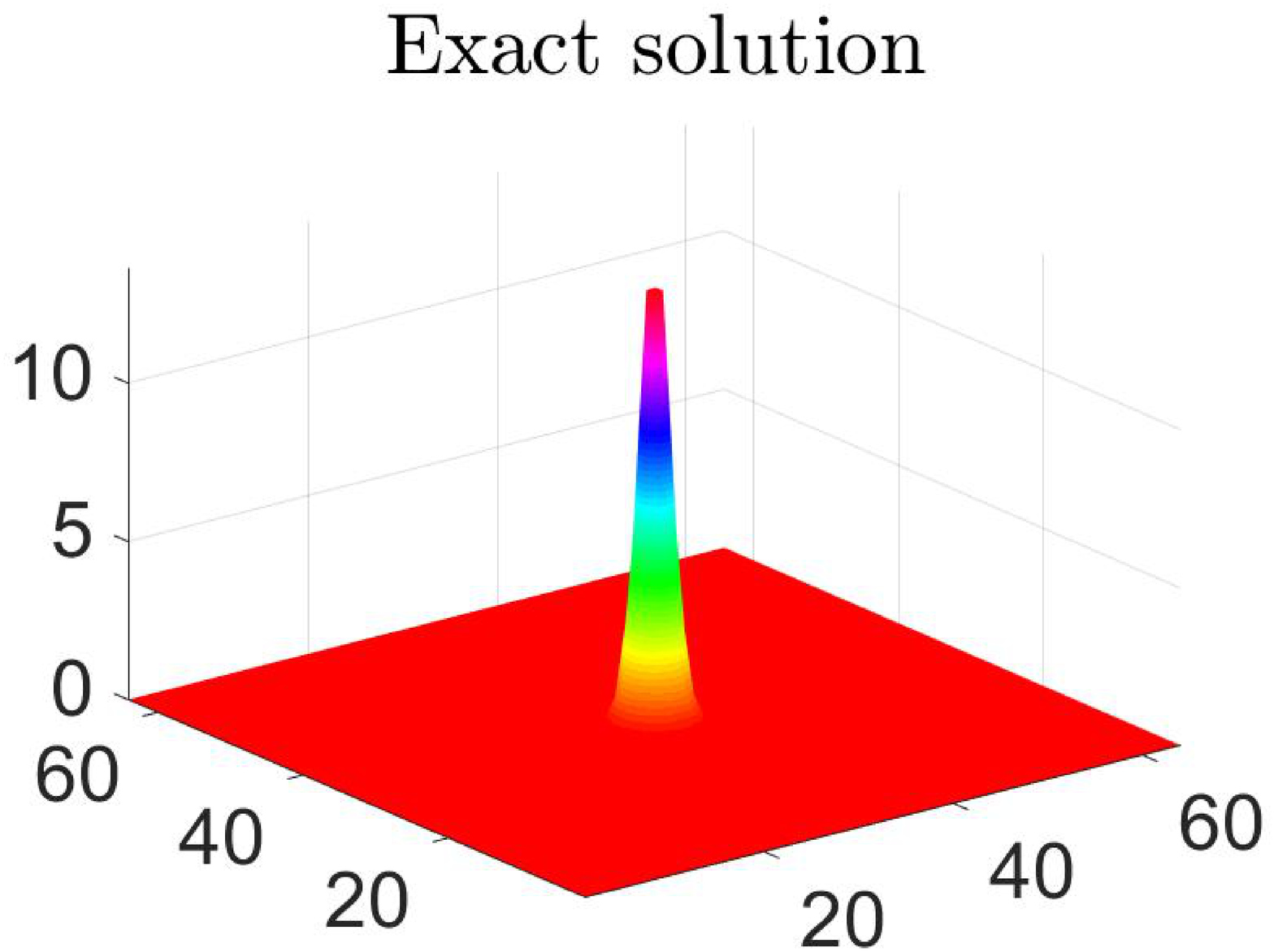}}
\subfigure[][$\varepsilon = 0.1$]
    {\includegraphics[scale=0.15]{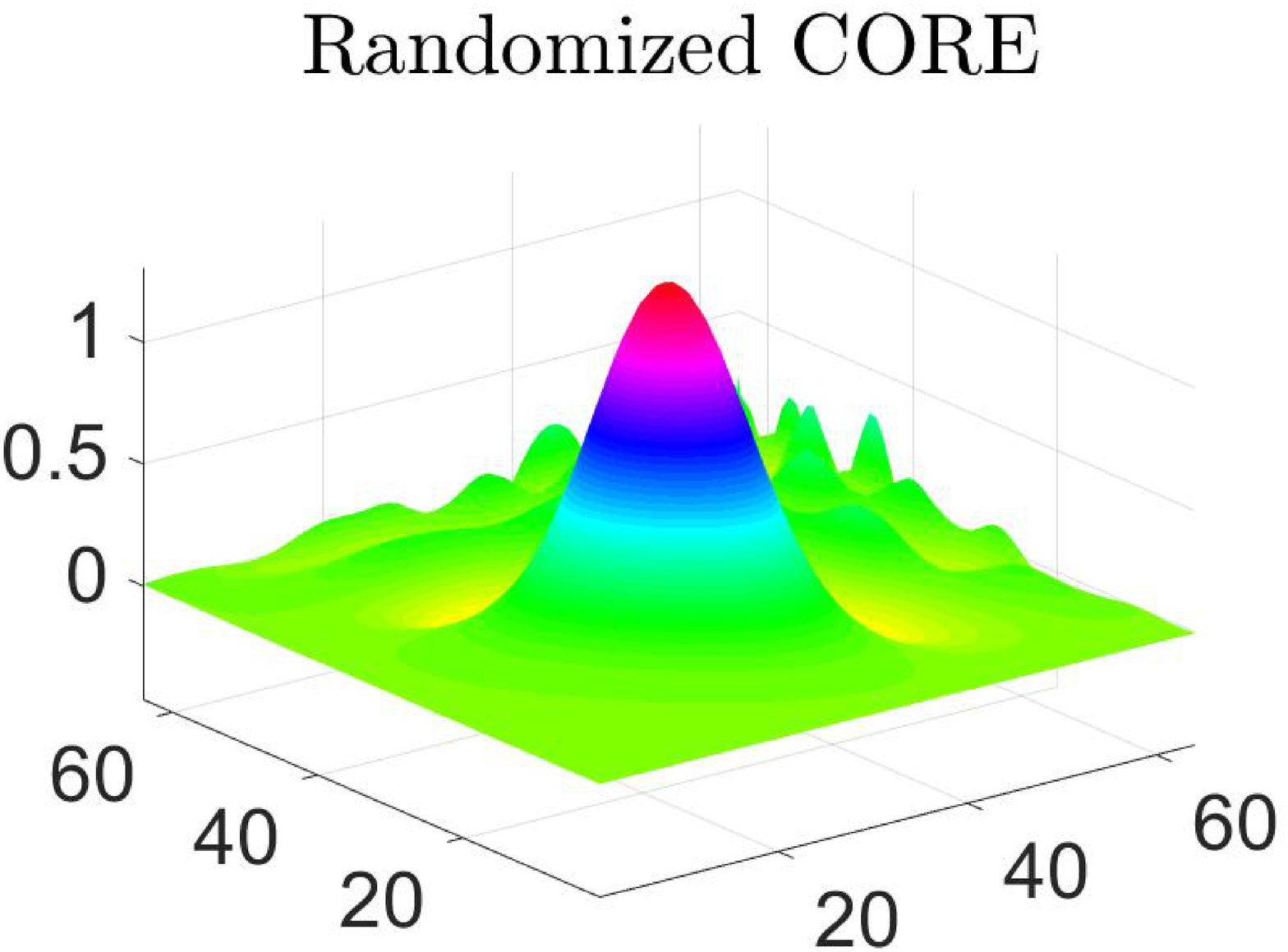}}
\subfigure[][$\varepsilon = 10^{-5}$]
    {\includegraphics[scale=0.15]{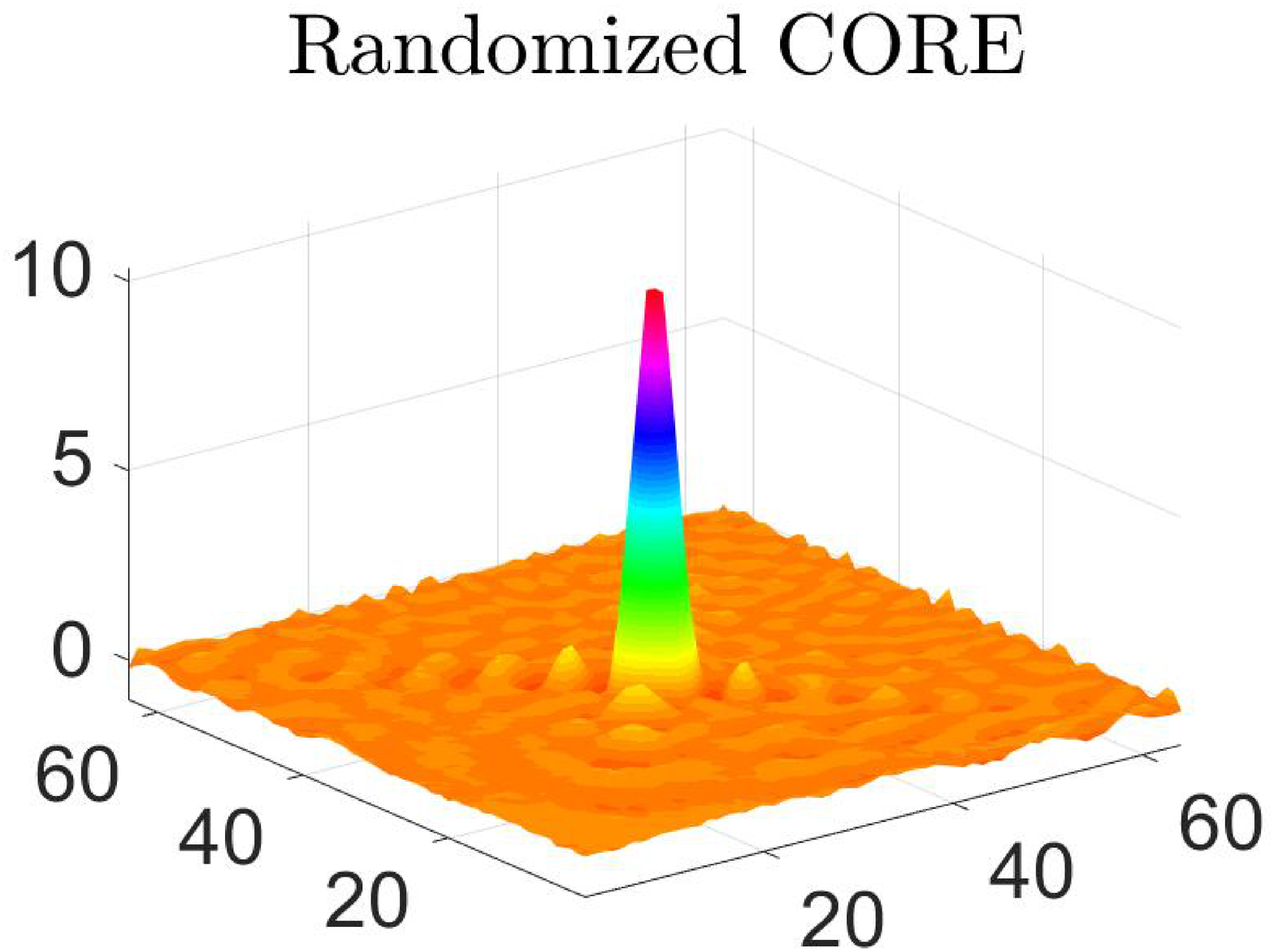}}
\caption{\textsf{PRnmr(n)}: Solutions obtained by Algorithm \ref{algo_tls2} for $n=64$.}\label{fig_ir2}
\end{figure}

\begin{figure}[!ht]\centering
\subfigure[][\textsf{Exact}]
    {\includegraphics[scale=0.12]{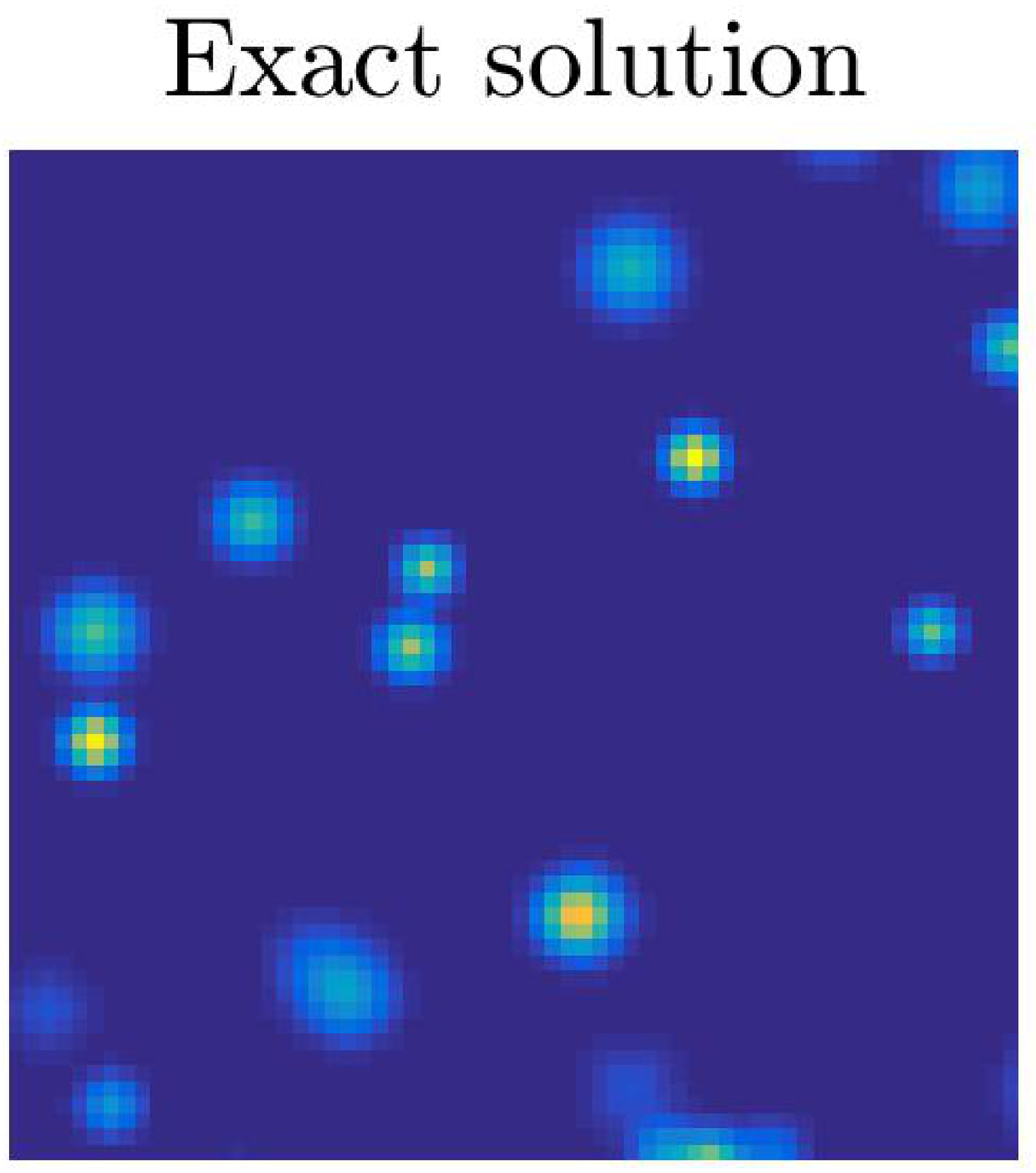}}
\subfigure[][\textsf{Blurred}]
    {\includegraphics[scale=0.12]{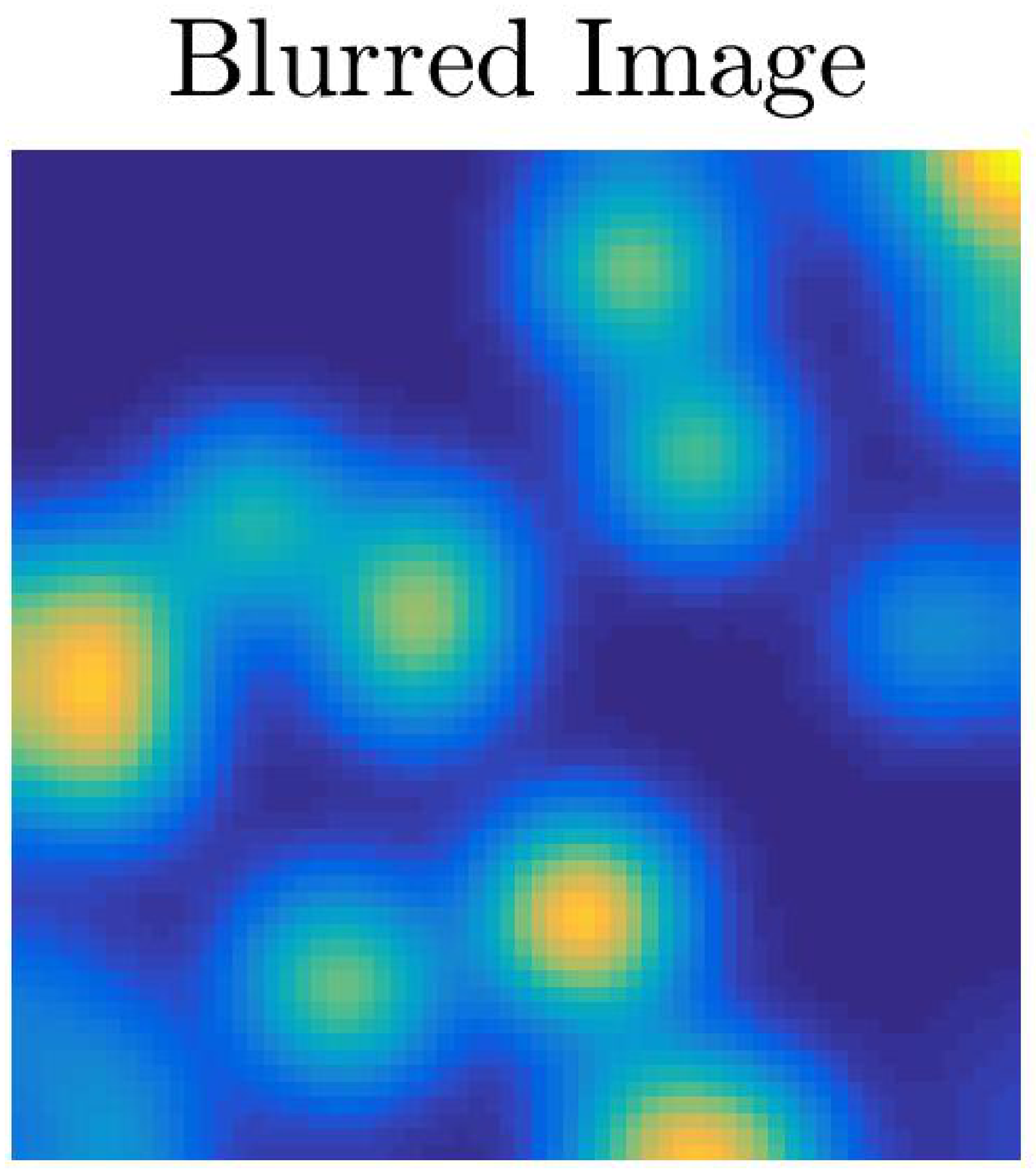}}
\subfigure[][$\varepsilon = 0.1$]
    {\includegraphics[scale=0.12]{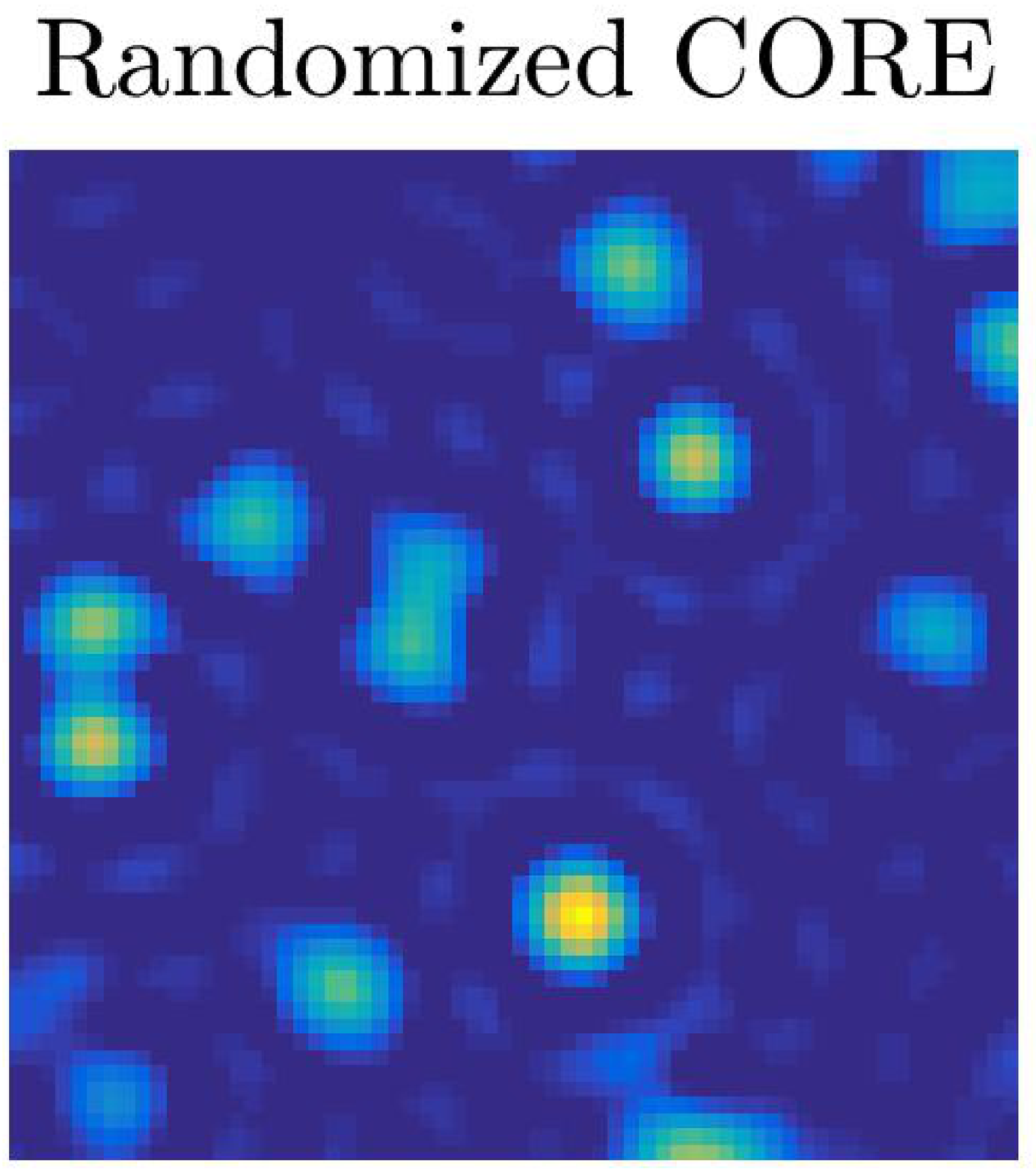}}
\subfigure[][$\varepsilon = 0.001$]
    {\includegraphics[scale=0.12]{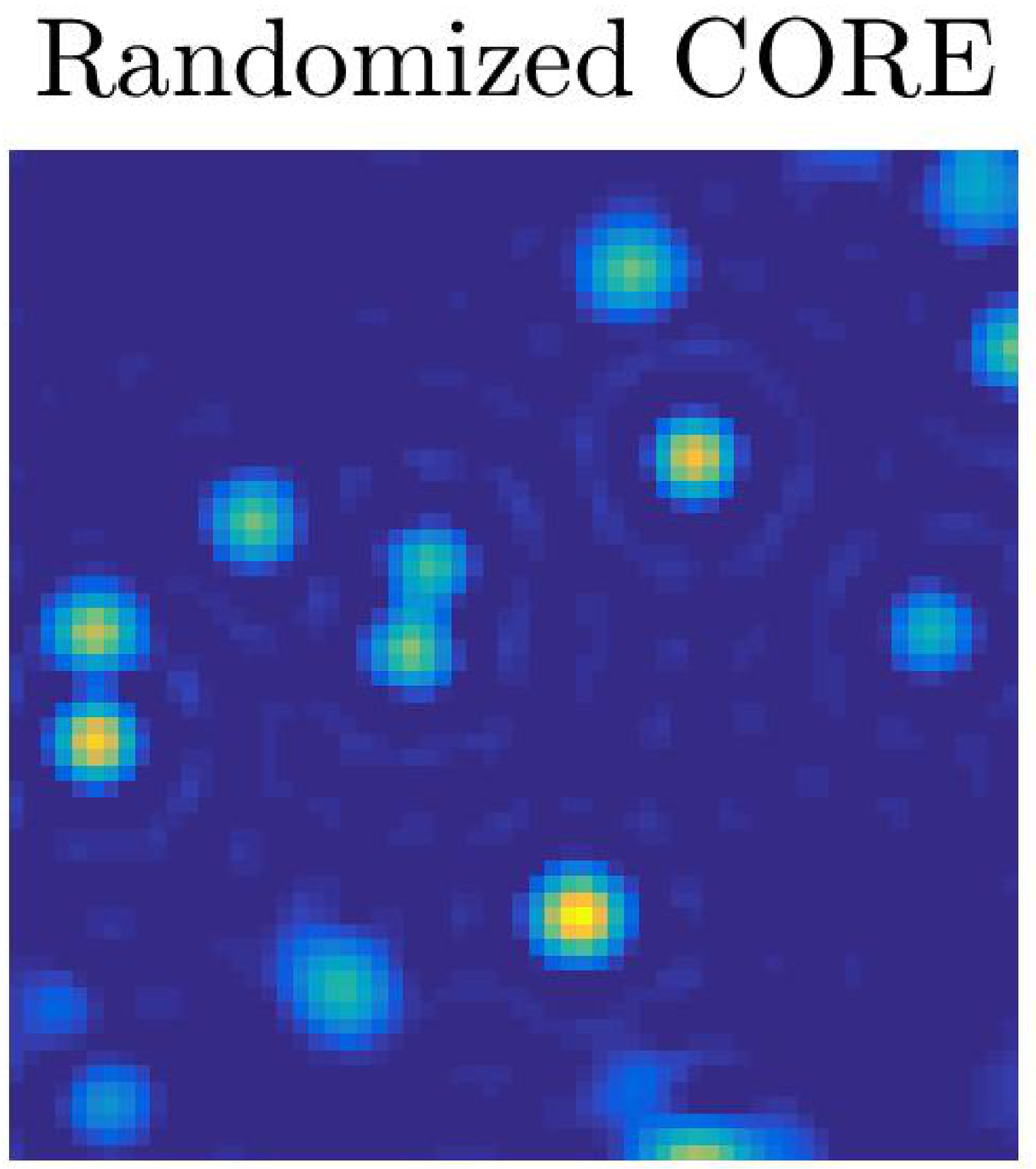}}
\caption{\textsf{PRblurgauss}: Solutions obtained by Algorithm \ref{algo_tls2} for $n=64$.}\label{fig_ir3}
\end{figure}
\section{Conclusion}\label{sec conclusion}
We propose an approximate core reduction and obtain the approximate TLS solution by a randomized algorithm. It can be treated as a regularization technique with the tolerance as a regularization parameter for the ill-posed problem. In theory and numerical experiments, we show that the randomized core reduction is competitive with the truncated TLS in accuracy and more efficient in time and storage. For the large-scale problem, the coefficient matrix does not need to be explicit.
In future, we shall consider the randomized core reduction with multiple right-hand sides and the structured randomized algorithm for the ill-posed problems arising from image restoration and signal processing.

\section*{Acknowledgments}
We wish to thank Prof. Eric King-Wah Chu and Dr. Min Wang who provided useful suggestions for improving the manuscript.
The first author is supported by the National Natural Science Foundation,
People's Republic of China (Grant No. 11601484).
The second author is supported by the International Cooperation Project of Shanghai Municipal
Science and Technology Commission (Grant No. 16510711200) and the National Natural Science Foundation,
People's Republic of China (Grant No. 11771099).

\end{document}